\documentclass[a4paper,11pt,reqno]{amsart}

\usepackage{tikz, subfigure}
\usetikzlibrary{arrows}

\usepackage{hyperref}

\voffset=-1.5cm \textheight=22cm \hoffset=.5cm \textwidth=14cm
\oddsidemargin=.4cm \evensidemargin=.4cm

\numberwithin{equation}{section}

\newtheorem{theorem}{Theorem}[section]
\newtheorem{lemma}[theorem]{Lemma}
\newtheorem{defin}[theorem]{Definition}
\newtheorem{prop}[theorem]{Proposition}
\newtheorem{remark}[theorem]{Remark}
\newtheorem{conjecture}[theorem]{Conjecture}
\newtheorem{corollary}[theorem]{Corollary}

\def\la{\lambda}
\def\R{\mathbb{R}}

\def\Om{\Omega}
\def\C{\mathcal{C}}
\def\mobile{M}

\def\hull{\mathrm{hull}}
\def\ho{\Om_{\text{\tiny HO}}}

\def\A{{\mathcal A}}
\def\diam{\mathrm{diam}\,}

\def\eps{\varepsilon}
\def\E{\mathcal{E}}
\def\th{\vartheta}

\def\betafk{\beta_{\text{\tiny FK}}}
\def\betaks{\beta_{\text{\tiny KS}}}

\title[Convex combinations of low eigenvalues]{Convex combinations of low eigenvalues,\\ Fraenkel asymmetries and attainable sets}

\author{Dario Mazzoleni and Davide Zucco}

\address{Dario Mazzoleni, Universit\`a degli Studi di Torino, Torino, Italy}
\email{dmazzole@unito.it}

\address{Davide Zucco, Scuola Internazionale Superiore di Studi Avanzati, Trieste, Italy}
\email{davide.zucco@sissa.it}

\begin{document}

\begin{abstract}
We consider the problem of minimizing convex combinations of the first two eigenvalues of the Dirichlet-Laplacian among open sets of $\R^N$ of fixed measure. We show that, by purely elementary arguments, based on the minimality condition, it is possible to obtain informations on the \emph{geometry} of the minimizers of convex combinations: we study, in particular, when these minimizers are no longer convex, and the optimality of balls. As an application of our results we study the boundary of the attainable set for the Dirichlet spectrum. 

Our techniques involve symmetry results \emph{\`a la} Serrin, explicit constants in quantitative inequalities,  as well as a purely geometrical problem: the minimization of the Fraenkel 2-asymmetry among \emph{convex} sets of fixed measure.
\end{abstract}

\maketitle

\paragraph{\textbf{Keywords:}}Eigenvalues, Dirichlet Laplacian, Fraenkel asymmetry, attainable set.

\smallskip
\paragraph{\textbf{Mathematics Subject Classification:}} 47A75, 49G05, 49Q10, 49R05.

\section{Introduction}

Spectral optimization problems have received a lot of attention in the last years, with a particular emphasis to extremum problems for eigenvalues of elliptic operators (see the books \cite{BB,H,HP}). A typical problem consists in the minimization of a functional defined in terms of the eigenvalues of the Laplace operator among sets of fixed measure. Here to simplify the exposition we will always consider the measure constraint equal one. The first issue for this kind of problems concerns the \emph{existence} of an optimal shape: a result proved in the 1990s by Buttazzo and Dal Maso \cite{BDM} is even now a cornerstone of the matter, and, for a large class of functionals, it ensures the existence of a solution in the class of quasi-open sets of fixed measure (\emph{a priori} contained into a given box, which provides the necessary compactness to prove existence). 
Moreover, the \emph{regularity} of an optimal shape is a highly difficult problem and a general regularity theory is nowadays not available: even a proof which guarantees that an optimal shape is open, and not merely quasi-open, is far from being trivial, see~\cite{bmpv}.
Another important point consists in proving some \emph{geometric} properties of optimal shapes, such as connectedness, convexity, symmetry with respect to some axis, and so on. By the way, only for few special functionals optimal shapes are explicitly known: classical examples are the lowest eigenvalues of the Dirichlet-Laplacian.
We recall that, for a given integer $N\geq 2$ and an open set $\Omega\subset \R^N$ with finite measure, the \emph{first} and \emph{second eigenvalues} of the Dirichlet-Laplacian can be defined as
\begin{equation*}
\la_1(\Om):=\min_{u\in H^1_0(\Om)\setminus\{0\}}\frac{\int_{\Omega} |\nabla u(x)|^2dx}{\int_{\Omega} |u(x)|^2dx}, \qquad \la_2(\Om):=\min_{\stackrel{u\in H^1_0(\Om)\setminus\{0\}}{ \int_\Omega u u_1=0}}\frac{\int_{\Omega} |\nabla u(x)|^2dx}{\int_{\Omega} |u(x)|^2dx},
\end{equation*}
where these minima are attained, respectively, by the \emph{first} and \emph{second eigenfunctions} $u_1$ and $u_2$ (which are unique, up to a multiplicative constant). 

The interest in the minimization of the first eigenvalue goes back to a conjecture due to Lord Rayleigh in 1877, then proved by Faber and Krahn in the 1920s. The \emph{Faber-Krahn inequality} asserts that of all open sets of fixed measure, the ball has the minimum first eigenvalue:
in formula, for every open set $\Om\subset \R^N$ with unit measure 
\begin{equation}\label{FK}
\lambda_1(\Omega)\ge\lambda_1(B)=\omega_N^{2/N} j_{N/2-1}^2,
\end{equation}
where $\omega_N$ denotes the measure of the ball in $\R^N$ with unit radius, $j_{n}$ the first positive zero of the Bessel function $J_n$, and $B$ the open ball of unit measure in $\R^N$. Equality in \eqref{FK} holds if and only if $\Omega$ is that ball (up to sets of capacity zero).  The same issue for the second eigenvalue is known as the \emph{Krahn-Szeg\"o inequality}, which asserts that two disjoint open balls of half measure each are the unique (up to sets of capacity zero) minimizer, namely for every open set $\Om\subset \R^N$ with unit measure
\begin{equation}\label{KS} 
\lambda_2(\Omega)\ge \la_2(B_-\cup B_+)=2^{2/N} \lambda_1(B)=(2\omega_N)^{2/N} j_{N/2-1}^2,
\end{equation}
where $B_-\cup B_+$ is the union of two equal and disjoint open balls of half measure each, and equality in \eqref{KS} holds if and only if $\Omega=B_-\cup B_+$.

\smallskip

Starting with the important work of Keller and Wolf~\cite{KW}, there was a strong interest for \emph{convex combinations} of the first two eigenvalues of the Dirichlet-Laplacian, namely the functional $F_t$ defined, for every $t\in(0,1)$, as
\begin{equation}\label{convexcomb}
F_t(\Om):=t\la_1(\Om)+(1-t)\la_2(\Om),
\end{equation}
where $\Om\subset \R^N$ is an open set of finite measure.
Then, the corresponding spectral optimization problem writes as
\begin{equation}\label{P}
	\min{\left\{F_t(\Omega)\;:\;\text{$\Om\subset\R^N$, $\Omega$ open, $|\Om|=1$}\right\}}.
\end{equation}
The existence of a minimizer for this problem is now well understood and is guaranteed by a general theory recently developed in the works~\cite{Bucur,bmpv,mp}, all based on the above mentioned result \cite{BDM}, but with the new difficulty of working in the entire space $\R^N$. Notice that, all these results guarantee the existence of an optimal shape in the larger class of quasi-open sets, and only \emph{a posteriori} one proves that a minimizer of problem \eqref{P} is in fact \emph{open}, and so problem \eqref{P} is well-posed.
Moreover, in~\cite{im} it was proved that, for every $t\in(0,1)$, minimizers of \eqref{P} are \emph{connected} (more generally, this topological property was studied for minimizers of convex combinations of the first three eigenvalues). In two dimensions ($N=2$), some numerical computations on the shape of these minimizers appeared in \cite{KO}.
We sum up all these results in the following theorem.
 
\begin{theorem}
For every $t\in(0,1)$, there exists a minimizer in \eqref{P}. Moreover, every minimizer $\Omega_t$ is a connected set of finite perimeter with uniformly bounded diameter (depending only on the dimension $N$).
\end{theorem}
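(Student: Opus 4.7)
The proof is essentially a compilation of the theory developed in the cited works, so the plan is to assemble these results rather than to produce a self-contained argument. I will proceed in four steps, corresponding to the four properties claimed (existence, finite perimeter, bounded diameter, connectedness).

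First, for \textbf{existence} I would invoke the general scheme for spectral optimization problems in $\R^N$. Since $F_t$ is a monotone decreasing and $\gamma$-continuous functional of the domain (as a convex combination of $\la_1$ and $\la_2$, each of which has these properties), one applies the Buttazzo--Dal Maso theorem \cite{BDM} on any fixed box to get a minimizer in the class of quasi-open sets inside the box, and then removes the box constraint by the concentration-compactness type arguments in \cite{Bucur,bmpv,mp}. The delicate point here is to rule out dichotomy of minimizing sequences (two pieces escaping to infinity and decoupling); for $F_t$ with $t\in(0,1)$ this is handled exactly as in \cite{mp} because the strict positivity of the coefficient of $\la_1$ forces mass to stay together (losing mass strictly increases $\la_1$, hence strictly increases $F_t$). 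Then \cite{bmpv} upgrades any such quasi-open minimizer to an \emph{open} set.

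Second, for the \textbf{finite perimeter} property I would cite the regularity theory developed in \cite{bmpv}: the minimizer $\Omega_t$ satisfies an inward/outward minimality condition against perturbations preserving the measure constraint, which, combined with the Lipschitz regularity of the eigenfunctions, yields that $\partial\Omega_t$ is (Ahlfors) regular and hence $\Omega_t$ has finite perimeter. No new argument is needed beyond pointing out that $F_t$ fits the hypotheses of that theory.

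Third, the \textbf{uniform diameter bound} follows from a standard cut-and-paste argument: if $\Omega_t$ had very large diameter, one could split it by a hyperplane into two far-apart pieces and use a scaling trick to reduce $F_t$, contradicting minimality. Quantitatively, the bound depends only on $N$ (and on $\lambda_1(B)$, $\lambda_2(B_-\cup B_+)$, hence only on $N$) because both $\la_1(\Omega_t)$ and $\la_2(\Omega_t)$ are a priori bounded above by the value $F_t(B_-\cup B_+)\le \la_2(B_-\cup B_+)$. This is carried out in \cite{mp,bmpv}.

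Finally, for \textbf{connectedness} I would simply quote \cite{im}, where it is proved that every minimizer of a convex combination of the first two (indeed, the first three) Dirichlet eigenvalues is connected: the argument rules out that $\Omega_t$ be a disjoint union of two components, because otherwise one could replace the component carrying the second eigenfunction by a scaled copy of the other, strictly decreasing $F_t$. The main obstacle in the whole proof is really the existence step, i.e.\ the loss of compactness in $\R^N$; the other three statements are essentially quotations once existence of an open minimizer is secured.
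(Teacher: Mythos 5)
Your proposal is correct and takes essentially the same approach as the paper: the paper does not supply a self-contained proof of this statement but presents it as a summary theorem, with existence and openness from \cite{Bucur,bmpv,mp} (built on \cite{BDM}), finite perimeter and the uniform diameter bound from the regularity theory of \cite{bmpv,mp}, and connectedness from \cite{im} — exactly the four sources you assemble, in the same roles.
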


\smallskip

Our goal here is to show that, by purely elementary arguments essentially based on the minimality condition, it is possible to obtain interesting informations on the geometry of the minimizers for problem~\eqref{P}, and to recover some known results on the boundary of the attainable set for the Dirichlet spectrum (see \cite{AH2,ah,BNP,KW}). 

Notice that, if $t=1$ the convex combination \eqref{convexcomb} is minimized by the ball with unit measure (because of the Faber-Krahn inequality \eqref{FK}), while if $t=0$,  by two equal balls of half measure each (because of the Krahn-Szeg\"o inequality \eqref{KS}). Therefore, as $t$ moves from $1$ to $0$, one expects the shape of a minimizer $\Om_t$ deforming from a ball of unit measure to two balls of half measure each; in particular, it is natural to conjecture that at some value of $t$ the convexity of all the minimizers in \eqref{P} is lost (as was numerically observed in \cite{KO}, in two dimensions, the critical value for $t$ is expected to be $1/2$).
This question also appeared in \cite{H} as the Open Problem 21. We give a first answer to this question, though non-optimal.
All the results of this paper, unless otherwise specified, will hold in every dimension $N\geq 2$.

\begin{theorem}\label{teo.1}
There exists a threshold $T>0$ such that, for all $t\in (0,T)$, every minimizer in~\eqref{P} is no longer convex.
\end{theorem}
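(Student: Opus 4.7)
The plan is to exploit that at $t=0$ the unique minimizer is $B_-\cup B_+$, which is not convex, and to propagate this non-convexity to small $t>0$ by a quantitative stability argument. I would combine three ingredients: a minimality estimate on $\la_2(\Om_t)$, a quantitative form of Krahn-Szeg\"o in terms of the Fraenkel 2-asymmetry $\A_2$, and a strictly positive lower bound for $\A_2$ on convex sets of unit measure.

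First, I would test~\eqref{P} with the competitor $B_-\cup B_+$. Since $\la_1(B_-\cup B_+)=\la_2(B_-\cup B_+)=2^{2/N}\la_1(B)$, one has $F_t(\Om_t)\le 2^{2/N}\la_1(B)$; combined with Faber-Krahn~\eqref{FK} applied to $\Om_t$, rearranging yields
\[
\la_2(\Om_t)-\la_2(B_-\cup B_+)\le \frac{t}{1-t}\bigl(2^{2/N}-1\bigr)\la_1(B)=O(t).
\]
Next, I would invoke a quantitative Krahn-Szeg\"o inequality of the form
\[
\la_2(\Om)-\la_2(B_-\cup B_+)\ge \betaks\,\A_2(\Om)^\alpha
\]
for every open $\Om\subset\R^N$ of unit measure, where $\A_2(\Om)$ denotes the infimum $L^1$-distance of $\Om$ to a pair of equal disjoint balls of half measure and $\betaks,\alpha>0$ are explicit. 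Combined with the previous step this forces $\A_2(\Om_t)=O(t^{1/\alpha})$ as $t\to 0^+$.

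The third ingredient is the purely geometric statement
\[
\A^\star:=\inf\bigl\{\A_2(\Om)\,:\,\Om\subset\R^N\text{ convex},\ |\Om|=1\bigr\}>0.
\]
Once $\A^\star>0$ is established, one chooses $T>0$ so small that the previous bound gives $\A_2(\Om_t)<\A^\star$ for every $t\in(0,T)$; then $\Om_t$ cannot be convex, and the theorem follows.

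The hard part will be the lower bound $\A^\star>0$: this is the ``minimization of the Fraenkel 2-asymmetry among convex sets'' announced in the abstract, and requires a genuinely geometric compactness argument ruling out sequences of convex sets of unit measure whose mass concentrates near two disjoint equal balls (typically by passing to Hausdorff limits of convex bodies, exploiting the uniform diameter bound, and combining with connectedness). Obtaining \emph{explicit} values of $\betaks$ and $\alpha$ in the quantitative Krahn-Szeg\"o inequality, rather than mere existence, is the other technical hurdle and determines how tractable the threshold $T$ can be made.
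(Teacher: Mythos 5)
Your proof is correct, and it takes a genuinely different route from the paper's. Both proofs begin the same way: testing the minimality of $\Om_t$ against the competitor $B_-\cup B_+$, combining with Faber-Krahn, and rearranging to get (your bound is equivalent to the paper's \eqref{const2}) the estimate
\[
\la_2(\Om_t)-\la_2(B_-\cup B_+)\le \frac{t}{1-t}\,(2^{2/N}-1)\,\la_1(B).
\]
The divergence is in how the left-hand side is bounded from below when $\Om_t$ is convex. The paper invokes the Henrot--Oudet result~\eqref{HO} (the minimization of $\la_2$ over \emph{convex} sets of unit measure, with $\la_2(\ho)>\la_2(B_-\cup B_+)$ by uniqueness in Krahn--Szeg\"o), obtaining the threshold $T$ directly from $\la_2(\ho)$; the quantitative Krahn--Szeg\"o inequality and the isoperimetric problem for $\A_2$ then appear only in Corollary~\ref{cor3.3} to make $T$ explicit. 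You instead bypass Henrot--Oudet altogether: you apply the quantitative Krahn--Szeg\"o inequality~\eqref{QKS} directly to $\Om_t$ and then use the strictly positive infimum $\A^\star=\inf\{\A_2(\Om):\Om \text{ convex},|\Om|=1\}$, which is exactly the content of Section~\ref{sec.fras} (where $\A^\star=\A_2(M)$ in the plane, with positivity following from compactness and connectedness of convex sets). Your approach merges the existence and the quantitative steps, and is in a sense more elementary (no reliance on~\cite{ho}), but it yields a potentially smaller threshold: since $\ho$ is convex, $\A_2(\ho)\ge\A^\star=\A_2(M)$, so the paper's bound in Corollary~\ref{cor3.3} dominates the one you would obtain with $\A^\star$ in place of $\A_2(\ho)$. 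The only thing you left as a gap, the positivity of $\A^\star$, is precisely what the paper's Theorem~\ref{teo.iso} establishes (indeed pinning down the exact minimizer), and the compactness argument you sketch for it is the right one.
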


We provide a \emph{quantitative} proof of this theorem, namely we explicitly construct  the threshold $T$ \emph{via} the eigenvalues of the Dirichlet-Laplacian \eqref{explicit} (cf. with \cite[Section 5.3]{bbv} where a similar question was analyzed in a different context, and whose strategy of proof could be adapted to prove Theorem~\ref{teo.1}, however without getting an explicit value for $T$).
To be more concrete, in two dimensions, we provide a numerical lower bound on $T$ using a quantitative Krahn-Szeg\"o inequality involving the so-called Fraenkel 2-asymmetry. 
Therefore, we are naturally led to consider a purely geometrical problem, which is probably the most innovative part of the paper: the minimization of the Fraenkel 2-asymmetry among \emph{convex} sets of given area.
We show that the \emph{mobile}, i.e., the intersection of the convex hull of two tangent balls with a strip, see Definition \ref{mobile},
is the unique minimizer satisfying an isoperimetric inequality for the Fraenkel 2-asymmetry \eqref{isofras}. An explicit value for the constant in the quantitative Krahn-Szeg\"o inequality will be also needed. This opens a new area of application for quantitative inequalities.

As second question, we analyze the optimality of a special convex set: the ball, generalizing a result from~\cite{KW}. 
\begin{theorem}\label{teo.2}
For all $t\in (0,1)$ the ball $B$ is never a minimizer in \eqref{P}.
\end{theorem}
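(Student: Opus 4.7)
The plan is to rule out $B$ as a minimizer of $F_t$ for \emph{every} $t\in(0,1)$ via a single first-variation argument exploiting two features of the ball: the stationarity of $\la_1$ (Faber--Krahn rigidity) and the $N$-fold multiplicity of $\la_2(B)$.

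Fix $V\in C^\infty(\partial B)$ with $\int_{\partial B}V\,d\sigma=0$, and consider a smooth volume-preserving family $\Om_s$ with $\Om_0=B$ and outward normal velocity $V$ at $s=0$. Since $u_1$ is radial, $|\nabla u_1|=c_1$ is constant on $\partial B$, and Hadamard's formula gives
\begin{equation*}
\frac{d}{ds}\la_1(\Om_s)\bigg|_{s=0}=-c_1^2\int_{\partial B}V\,d\sigma=0.
\end{equation*}
The eigenvalue $\la_2(B)$ has multiplicity $N$, with an orthonormal basis of eigenfunctions of the form $u_2^{(i)}=\psi(r)Y_i(\theta)$ for $i=1,\dots,N$, where $\{Y_i\}$ is an $L^2(\partial B)$-orthonormal basis of degree-$1$ spherical harmonics. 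Standard shape-calculus for multiple eigenvalues (see Henrot's book) identifies the right derivative of $s\mapsto\la_2(\Om_s)$ with the smallest eigenvalue of the symmetric $N\times N$ matrix
\begin{equation*}
M_{ij}:=-\int_{\partial B}\partial_\nu u_2^{(i)}\,\partial_\nu u_2^{(j)}\,V\,d\sigma=-c_2\int_{\partial B}Y_iY_j V\,d\sigma,
\end{equation*}
with $c_2:=\psi'(\rho)^2>0$ and $\rho$ the radius of $B$.

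By the rotational invariance of the eigenspace (addition formula for spherical harmonics), $\sum_i Y_i^2$ is constant on $\partial B$, hence
\begin{equation*}
\mathrm{tr}(M)=-c_2\cdot\mathrm{const}\cdot\int_{\partial B}V\,d\sigma=0.
\end{equation*}
Choosing $V:=Y_1Y_2|_{\partial B}$ (which has zero mean, being the product of two distinct orthogonal degree-$1$ harmonics) produces
\begin{equation*}
M_{12}=-c_2\int_{\partial B}Y_1^2Y_2^2\,d\sigma<0,
\end{equation*}
so $M\neq 0$. A nonzero symmetric matrix with vanishing trace has a strictly negative eigenvalue, so $\min\mathrm{spec}(M)<0$. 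Consequently, for every $t\in(0,1)$,
\begin{equation*}
\frac{d}{ds}F_t(\Om_s)\bigg|_{s=0^+}=(1-t)\min\mathrm{spec}(M)<0,
\end{equation*}
which yields $F_t(\Om_s)<F_t(B)$ for small $s>0$ and contradicts the minimality of $B$.

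The main technical point is that $\la_2$ is only one-sided differentiable along $\Om_s$, but its right derivative is governed precisely by $\min\mathrm{spec}(M)$; once this is quoted, the rest reduces to the elementary linear-algebra fact that a traceless nonzero symmetric matrix has a negative eigenvalue. The argument is \emph{uniform} in $t\in(0,1)$: the same perturbation $V$ works for all $t$, so no separate treatment of $t$ near $0$ or near $1$ is needed, because the first-order vanishing of $\la_1$ at $B$ eliminates the $t$-dependence at leading order.
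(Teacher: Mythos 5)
Your proof is correct, and it takes a genuinely different route from the paper. The paper establishes a stronger intermediate result (Proposition~\ref{la2simple}): for any $C^2$ minimizer $\Om_t$, the second eigenvalue $\la_2(\Om_t)$ must be simple. The proof there considers an arbitrary minimizer, deforms it, argues that if $\la_2$ were multiple one could always find a perturbation decreasing the Lagrangian unless certain overdetermined boundary conditions hold, and then invokes Serrin's symmetry theorem to show these conditions force $\Om_t$ to be a ball --- which is then excluded because $\partial u_2/\partial \nu$ is not constant on a sphere. Theorem~\ref{teo.2} then follows as a corollary: $\la_2(B)$ has multiplicity $N>1$. You instead work directly on $B$, where the eigenfunction structure is explicit: $u_1$ radial makes $\la_1$ stationary under volume-preserving perturbations, the degree-$1$ spherical harmonics give $\sum_i Y_i^2=\mathrm{const}$ and hence $\mathrm{tr}(M)=0$, and the explicit choice $V=Y_1Y_2$ makes $M$ nonzero, so that the traceless symmetric $M$ has a strictly negative eigenvalue and the right derivative of $F_t$ is negative for \emph{every} $t$. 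Both arguments rest on the same shape-calculus for multiple eigenvalues (Henrot's Theorem~2.5.8, which the paper also cites), but your argument is more elementary --- no Serrin theorem, no regularity hypotheses, a single explicit perturbation uniform in $t$ --- at the cost of proving only the non-optimality of the ball rather than the simplicity of $\la_2$ at an arbitrary minimizer. Your approach is essentially a coordinate-free, all-dimensional refinement of the Keller--Wolf nearly-spherical construction that the paper recalls in Remark~\ref{nearly}: the vanishing of the first-order term in $\la_1$ and the strictly negative first-order term in $\la_2$ are precisely what produce, at leading order, the $\eps^2$ versus $\eps$ asymmetry of that construction, without having to compute the perturbed eigenvalues explicitly.
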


We prove more generally that the second eigenvalue of a minimizer in \eqref{P} has to be simple and, as a consequence of the multiplicity of the second eigenvalue over balls, we immediately get the result in Theorem \ref{teo.2}.
The proof of the simplicity of the second eigenvalue relies on some ideas developed in \cite{H} and \cite{bbh}, with the help of a classical symmetry result due to Serrin \cite{serrin} (see also \cite{fg}). 

\smallskip

As an application of our results, we show how to get informations on the shape of the \emph{attainable set}, namely the subset of the plane described by the range of the first two eigenvalues of the Dirichlet-Laplacian
\begin{equation}\label{attainable}
	\E:=\left\{(x,y)\in\R^2\;:\;x=\la_1(\Om),\; y=\la_2(\Om),\;\Om\subset\R^N, \;\mbox{$\Om$ open},\;|\Om|=1\right\}.
\end{equation}
This set was introduced in \cite{KW}, and then deeply studied in \cite{bbf} (see also \cite{AH2, ah, BNP}), 
where several geometrical properties of $\E$ were discussed (the closedness of $\E$ is important for the existence of optimal shapes for non monotone functionals see \cite{bbf}). 

The link between problem~\eqref{P} and the set $\E$ is the following: for a fixed $t\in(0,1)$ 
a minimizer $\Om_t$ in \eqref{P} corresponds to the first point of $\E$ of coordinates $(\la_1(\Om_t),\la_2(\Om_t))$ that we reach with a line $tx+(1-t)y=a$ increasing the value $a$, that is $P_{\Om_t}:=(\la_1(\Om_t),\la_2(\Om_t))$ is one of the intersection points of the tangent line to $\E$ with slope $t/(t-1)$. In particular, if $t=1$ the tangent line $x=\la_1(B)$ has a \emph{unique} intersection point corresponding to the ball $B$ (because of the Faber-Krahn inequality \eqref{FK}), while if $t=0$, the tangent line $y=\la_2(\Theta)$ has a \emph{unique} intersection point corresponding to the two balls $B_-\cup B_+$ (because of the Krahn-Szeg\"o inequality \eqref{KS}).

Therefore, in Theorem~\ref{teo.tangent}, we will present a new strategy for studying the asymptotic behavior of the boundary of $\E$ near the points corresponding to $B$ and $B_-\cup B_+$, extending to all dimensions a result proved in \cite{KW} only in two dimensions, and recovering the result proved in \cite{BNP}. 
Indeed, according to \cite{AH2, ah, BNP,KW}, the common strategy to study the asymptotic behavior of $\partial \E$ consists of two steps: the construction, through a parameter $\epsilon>0$, of an explicit perturbation of the set corresponding to the limit point on the boundary of $\E$ (i.e., a ball or two balls) and then the computation of the corresponding limit as $\epsilon\to 0$. Here instead, to compute such a limit, we rely on the minimality condition of the minimizers of convex combinations \eqref{minimality} without any explicit construction. Moreover, looking at the boundary of the attainable set through convex combinations is very useful, since it works in all dimensions, and can be applied to other attainable sets with different constraints. 

We suspect that to properly understand the boundary behavior of the attainable set, one has to carefully analyze problem \eqref{P}. For this reason we restate the long-standing conjecture about the convexity of the attainable set in the language of the minimizers of convex combinations of the lowest Dirichlet eigenvalues. Notice that the link of problem \eqref{P} with the attainable set is not new, in fact it was used in \cite{KW} (and more recently in \cite{AH2} and \cite{ah}) to draw numerically the boundary of the attainable set. However, up to our knowledge, it was never used to study analytic results.

\smallskip
The paper is organized as follows.
In Section~\ref{sec.fras} we derive an isoperimetric inequality for the Fraenkel 2-asymmetry. In Section~\ref{sec.comb} we prove Theorem~\ref{teo.1} and Theorem~ \ref{teo.2}. In Section~\ref{sec.atta} we show some applications to the attainable set. In Appendix~\ref{appendix} we compute explicit constants in quantitative inequalities.

\smallskip

\paragraph{\textbf{Notation}} Throughout the paper, we always denote by $\Om_t$ a minimizer of problem~\eqref{P} for $t\in(0,1)$, by $B$ the open ball of unit measure and by $B_-\cup B_+$ two open balls of half measure each, saving the particular notation $\Theta$ when their centers are on the $x$-axis and they are \emph{tangent} in the origin (i.e., their closures are touching in the origin). We use the symbol $\subset$ to denote the \emph{strict} inclusion between sets, while $\subseteq$ if the inclusion holds possibly with the equality, and we use $\triangle$ for the symmetric difference between sets. We write $\diam(\cdot)$ and $\hull(\cdot)$ to denote the diameter and the convex envelope of a set, respectively. We use $\approx$ to denote an approximate value for a constant and we always consider only three decimal digits.

\section{An isoperimetric inequality for the Fraenkel 2-asymmetry}\label{sec.fras}

Quantitative inequalities are refinements of isoperimetric inequalities: they measure how far a set is from the optimal shape in terms of the deviation of the functional to its minimum value (for a brief overview on the topic see Appendix~\ref{appendix}). It is then important to look at the right quantity that provides such a measure. 

The most well-known example is the so called \emph{Fraenkel asymmetry}, which is often used when balls are optimal in an isoperimetric inequality: for an open set $\Om\subset\R^N$ with unit measure, it is defined as
\begin{equation}\label{2fras1}
\mathcal{A}(\Om):=\min{\left\{{|\Om\triangle B|}\;:\;  \text{$B$ open ball, $|B|=1$}\right\}}.
\end{equation}

In this paper we will rely on the Krahn-Szeg\"o inequality~\eqref{KS} in a quantitative form, and, since in this case two disjoint balls of equal measure are the optimal set, a different asymmetry is needed.
According to~\cite{bp}, the \emph{Fraenkel 2-asymmetry} is, for an open set  $\Om\subset\R^N$ with unit measure, defined as
\begin{equation}\label{2fras}
\A_2(\Om):=\min{\left\{{|\Om \triangle (B_-\cup B_+)|}\;:\, \text{$B_-,B_+$ disjoint open balls, $|B_-|=|B_+|={1}/{2}$}\right\}}.
\end{equation}
We point out that if $\Omega$ and $E$ are two measurable sets of $\R^N$ of the same measure $|\Omega|=|E|$, then
\begin{equation}\label{equivalent}
{|\Om\triangle E|}=2{|\Om \setminus	E|}=2{|E\setminus \Omega|},
\end{equation}
and this allows to write \eqref{2fras1} and \eqref{2fras} in a slightly different way, choosing $E=B$ or $E=B_-\cup B_+$. The relation \eqref{equivalent} will be used several times in the sequel.

A quantitative Krahn-Szeg\"o inequality was proved by Brasco and Pratelli in \cite{bp}: they showed the existence of a constant $C_{KS}>0$ (depending on the dimension) such that for all open sets $\Om\subset\R^N$ of finite measure $|\Om|=|B_-\cup B_+|$,
\begin{equation}\label{QKS}
\frac{\la_2(\Om)}{\la_2(B_-\cup B_+)}-1\geq C_{KS}\A_2(\Om)^{2(N+1)}.
\end{equation} 
Notice that $C_{KS}$ can be explicitly computed, see Appendix~\ref{appendix}.
In two dimension a quantitative Faber-Krahn inequality proved by Bhattacharya in~\cite{bha}, allows to improve the exponent of~\eqref{QKS}, still with a constant that can be explicitly computed: more precisely, if $N=2$ there exists $\betaks>0$ such that for all open sets $\Om\subset\R^2$ of finite measure $|\Om|=|B_-\cup B_+|$,
\begin{equation}\label{QKSbha}
\frac{\la_2(\Om)}{\la_2(B_-\cup B_+)}-1\geq \betaks\A_2(\Om)^{9/2},
\end{equation} 
where the constant $\betaks$, according to Appendix~\ref{appendix}, can be chosen as
\begin{equation}\label{constantKS}
\betaks=\frac{1}{24 j_0^2}\,\frac{\pi^{3/2}}{(\pi+2+2\pi^{1/3})^{9/2}}\cdot 10^{-5}\approx 3.331 \cdot 10^{-11}.
\end{equation}

\smallskip
In this section we analyze the following purely geometrical problem: minimize the Fraenkel 2-asymmetry among \emph{convex} sets
\begin{equation}\label{min2fras}
\inf \{\A_2(\Omega) : \; \text{$\Omega\subset \R^N$, $\Omega$ open and convex, $|\Omega|=1$} \}.
\end{equation}
The reason for studying problem \eqref{min2fras} is to quantitatively answer to Theorem~\ref{teo.1}. We focus only on the two dimensional case, since in this case we are able to identify the unique minimizer for this problem, although some of the results that we prove hold in any dimension (see Remark~\ref{rem.general}). 

The condition for the balls $B_-,B_+$ to be disjoint in \eqref{2fras} prevents two balls to overlap, but not their closures to be tangent.
This occurs, in particular, whenever the set $\Omega$ is convex. 

\begin{lemma}\label{lemma.theta}
Let $\Omega\subset \R^2$ be a convex open set with unit measure. 
The minimum in the definition of the Fraenkel 2-asymmetry \eqref{2fras} is attained by two \emph{tangent} balls (i.e. those whose closures are touching in a point).
\end{lemma}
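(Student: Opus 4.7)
The plan is to reformulate the minimum in the definition \eqref{2fras} as a constrained maximization of a log-concave function, and then to exploit the convexity of its maximizer set to produce a tangent competitor. First I would rewrite the functional: any admissible pair of disjoint balls satisfies $|B_- \cup B_+| = 1 = |\Om|$, so by \eqref{equivalent}
\[
|\Om \triangle (B_- \cup B_+)| = 2\bigl(1 - |\Om \cap B_-| - |\Om \cap B_+|\bigr).
\]
Writing $r_0 := (2\pi)^{-1/2}$ for the common radius and $g(x) := |\Om \cap B(x, r_0)|$, computing $\A_2(\Om)$ is equivalent to maximizing $g(x_-) + g(x_+)$ over $(x_-, x_+) \in \R^2 \times \R^2$ subject to $|x_- - x_+| \ge 2 r_0$. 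Existence of a maximizer $(x_-^*, x_+^*)$ follows from continuity of $g$ and compactness, since $g$ vanishes outside a bounded region of $\R^2$.

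The key observation is that $g$ is log-concave on $\R^2$: indeed $g = \chi_\Om \ast \chi_{B(0, r_0)}$ is a convolution of indicators of convex sets (hence of log-concave functions), and the Pr\'ekopa--Leindler inequality guarantees that log-concavity is preserved under convolution. Two consequences I will need are that the maximizer set $M := \{x \in \R^2 : g(x) = \max g\}$ is convex, as an upper level set of the log-concave $g$, and that every local maximum of $g$ is automatically global.

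To produce a tangent optimal pair, I may assume $|x_-^* - x_+^*| > 2 r_0$, otherwise there is nothing to prove. Freezing $x_+^*$, the disjointness constraint is inactive in a neighborhood of $x_-^*$, so $x_-^*$ is an unconstrained local maximum of $g$; by log-concavity $x_-^* \in M$, and symmetrically $x_+^* \in M$. Since $M$ is convex, the segment $[x_-^*, x_+^*]$ lies in $M$, and the distance to $x_-^*$ varies continuously along it from $0$ to $|x_+^* - x_-^*| > 2 r_0$. The intermediate value theorem then supplies $\tilde x_+ \in M$ with $|\tilde x_+ - x_-^*| = 2 r_0$, and the pair $\bigl(B(x_-^*, r_0),\, B(\tilde x_+, r_0)\bigr)$ is tangent and still maximizes $g(x_-) + g(x_+)$, hence realizes $\A_2(\Om)$.

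The main obstacle I anticipate is the clean invocation of log-concavity of $g$ via Pr\'ekopa--Leindler; the rest of the argument, including the ``plateau'' case in which $M$ is not a singleton, is then an immediate consequence of the convexity of $M$.
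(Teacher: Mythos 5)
Your proof is correct, but it takes a genuinely different route from the paper's. The paper argues directly on the symmetric difference: given any admissible non-tangent pair $B_-, B_+$ with $\Omega\cap B_\pm\ne\emptyset$, it sets up coordinates so that the part of $\partial\,\hull\bigl((\Omega\cap B_-)\cup(\Omega\cap B_+)\bigr)$ outside the balls lies on two lines, then translates $B_-$ toward $B_+$ until tangency; the convexity of $\Omega$ guarantees that for each $P_1\in\Omega\cap B_-$ the corresponding point at the same height in $B_+$ belongs to $\Omega$ and the joining segment stays in $\Omega$, so $\tau(\Omega\cap B_-)\subseteq\Omega\cap\tau(B_-)$ and the functional does not increase. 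You instead rewrite $\A_2$ as a constrained maximization of $g(x_-)+g(x_+)$ with $g=\chi_\Omega\ast\chi_{B(0,r_0)}$, invoke Pr\'ekopa--Leindler to obtain log-concavity of $g$, and then read off two structural facts, namely that unconstrained local maxima are global and that the argmax set $M$ is convex, from which the intermediate value theorem on the segment $[x_-^*,x_+^*]\subset M$ produces a tangent optimal pair. Your route buys dimension-independence for free (the paper's coordinate manipulation is genuinely two-dimensional, even though Remark~\ref{rem.general} asserts the lemma extends to $N>2$), and the ``plateau'' case is handled automatically by the convexity of $M$ rather than by the paper's side remark about non-uniqueness; the price is an appeal to Pr\'ekopa--Leindler instead of an elementary computation. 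One small point that both arguments gloss over in the same way: you need $g(x_-^*)>0$ and $g(x_+^*)>0$ to upgrade the local maxima to global ones (the paper likewise simply asserts one may assume $\Omega\cap B_\pm\ne\emptyset$). This can be patched by noting that if, say, $g(x_-^*)=0$ then $g$ would vanish on $\{|x-x_+^*|\ge 2r_0\}$, so the open support $\Omega\oplus B(0,r_0)$ would lie in a ball of diameter $4r_0$, contradicting $\diam\bigl(\Omega\oplus B(0,r_0)\bigr)\ge\diam(\Omega)+2r_0\ge 2r_0(1+\sqrt 2)>4r_0$, the middle inequality coming from the fact that the ball minimizes diameter among convex planar sets of unit area.
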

\begin{proof}
We prove that tangent balls are always better competitors than non-tangent balls. Let $B_-$ and $B_+$ be two admissible balls for $\eqref{2fras}$ which are non-tangent.  We may assume both $\Omega\cap B_-$ and $\Omega\cap B_+$ to be non-empty sets, otherwise the quantity $|\Omega\triangle (B_-\cup B_+)|$ can be decreased by any other couple of non-tangent balls satisfying this property. Therefore, the boundary of the convex envelope $\hull\big((\Omega\cap B_-)\cup(\Omega\cap B_+)\big)$  outside the balls $B_-$ and $B_+$ is  made exactly by two segments. By moving and rotating the coordinate system we may assume these segments to be onto the half lines $y=\pm mx$ with $x\geq 0$, or in the limiting case $y=\pm m$ with $x\geq 0$, for a suitable choice of the constant $m>0$. Possibly exchanging the role of $B_-$ and $B_+$, we can also suppose $B_-$ to be on the left with respect to $B_+$.  All these assumptions combined with the convexity of $\Omega$ guarantee the following fact: to every point $P_1=(x_1,y)\in \Om\cap B_-$ there exists a point (with the same ordinate) $P_2=(x_2,y)\in \Om\cap B_+$, and moreover, the whole segment $\overline{P_1P_2}\subseteq \Omega$. Therefore, if $\tau$ denotes the translation toward the right, mapping the ball $B_-$ into the (left) tangent ball $\tau(B_-)$ to $B_+$, this means that $\tau(\Omega\cap B_-)=\tau(\Omega)\cap \tau (B_-)\subseteq \Omega\cap \tau(B_-)$. In particular $|\Omega\cap B_-|=|\tau(\Omega\cap B_-)|\leq|\Omega\cap \tau(B_-)|$, and so passing to the complementary sets yields that $|(B_-\cup B_+)\setminus\Omega|\geq |(\tau(B_-)\cup B_+)\setminus\Omega|$, which, together with \eqref{equivalent} for $E=B_-\cup B_+$, implies that the functional to be minimized has been not increased on tangent balls.
We remark that the configuration of the two optimal balls is not always unique (one can think, for example to $\Om$ as a long rectangle), but also in this case, two tangent balls are one of the admissible optimal configuration, and we choose them for the next steps. 
\end{proof}

We are ready to analyze in detail problem~\eqref{min2fras}. Therefore, we introduce the following definition.

\begin{defin}\label{mobile}
We call \emph{mobile} the open convex set $\mobile$ given by the intersection of the stadium $\hull(\Theta)$ (centered in the origin) with the horizontal strip $\{(x,y)\in\R^2\,:\,-h\leq y\leq h\}$, where $h>0$ is chosen so that $|\mobile|=1$ (see Figure \ref{figure}).
\end{defin}

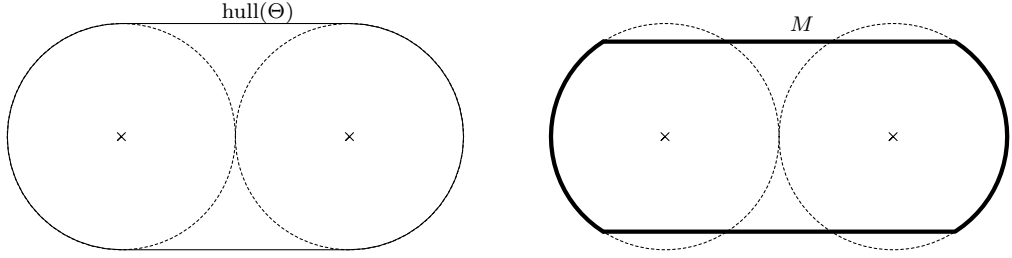
\begin{figure}[t]
\centering
\subfigure{
\begin{tikzpicture}[scale=1.5, line cap=round,line join=round,>=triangle 45,x=1.0cm,y=1.0cm]
\clip(-2.21,-1.35) rectangle (2.13,1.19);
\draw [line width=0.4pt,dash pattern=on 1pt off 1pt] (-1.0,0.0) circle (1.0cm);
\draw [line width=0.4pt,dash pattern=on 1pt off 1pt] (1.0,0.0) circle (1.0cm);
\draw (-1.0,1.0)-- (1.0,1.0);
\draw (1.0,-1.0)-- (-1.0,-1.0);
\draw [shift={(-1.0,0.0)}] plot[domain=1.57:4.71,variable=\t]({1.0*1.0*cos(\t r)+-0.0*1.0*sin(\t r)},{0.0*1.0*cos(\t r)+1.0*1.0*sin(\t r)});
\draw [shift={(1.0,0.0)}] plot[domain=-1.57:1.57,variable=\t]({1.0*1.0*cos(\t r)+-0.0*1.0*sin(\t r)},{0.0*1.0*cos(\t r)+1.0*1.0*sin(\t r)});
\begin{scriptsize}
\draw [color=black] (-1.0,0.0)-- ++(-1.0pt,-1.0pt) -- ++(2.0pt,2.0pt) ++(-2.0pt,0) -- ++(2.0pt,-2.0pt);
\draw [color=black] (1.0,0.0)-- ++(-1.0pt,-1.0pt) -- ++(2.0pt,2.0pt) ++(-2.0pt,0) -- ++(2.0pt,-2.0pt);
\draw (0.2,1.1) node {$\hull(\Theta)$};
\end{scriptsize}
\end{tikzpicture}}
\quad
\subfigure{
\begin{tikzpicture}[scale=1.5, line cap=round,line join=round,>=triangle 45,x=1.0cm,y=1.0cm]
\clip(-2.21,-1.35) rectangle (2.13,1.19);
\draw [line width=0.4pt,dash pattern=on 1pt off 1pt] (-1.0,0.0) circle (1.0cm);
\draw [line width=0.4pt,dash pattern=on 1pt off 1pt] (1.0,0.0) circle (1.0cm);
\draw [line width=1.6pt] (-1.54,0.84)-- (1.54,0.84);
\draw [line width=1.6pt] (1.54,-0.84)-- (-1.54,-0.84);
\draw [shift={(-1.0,0.0)},line width=1.6pt]  plot[domain=2.14:4.14,variable=\t]({1.0*1.0*cos(\t r)+-0.0*1.0*sin(\t r)},{0.0*1.0*cos(\t r)+1.0*1.0*sin(\t r)});
\draw [shift={(1.0,0.0)},line width=1.6pt]  plot[domain=-0.99:0.99,variable=\t]({1.0*1.0*cos(\t r)+-0.0*1.0*sin(\t r)},{0.0*1.0*cos(\t r)+1.0*1.0*sin(\t r)});
\begin{scriptsize}
\draw [color=black] (-1.0,0.0)-- ++(-1.0pt,-1.0pt) -- ++(2.0pt,2.0pt) ++(-2.0pt,0) -- ++(2.0pt,-2.0pt);
\draw [color=black] (1.0,0.0)-- ++(-1.0pt,-1.0pt) -- ++(2.0pt,2.0pt) ++(-2.0pt,0) -- ++(2.0pt,-2.0pt);
\draw (0.2,1) node {$M$};
\end{scriptsize}
\end{tikzpicture}}
\caption{The stadium $\hull(\Theta)$ and the mobile $M$.} \label{figure}
\end{figure}

For the sake of the reader we immediately compute the right value of $h$ deducing the value of the Fraenkel 2-asymmetry for the mobile $M$. 
\begin{lemma}\label{lemma.mobile}
The height $h$ in Definition~\ref{mobile} is approximately $0.336$. Therefore, the Fraenkel 2-asymmetry for the mobile $M$ is
\begin{equation}\label{hdjd}
\A_2(M)=\frac{16h}{\sqrt{2\pi}}-2\approx 0.147.
\end{equation}
\end{lemma}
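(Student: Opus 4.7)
The plan is to pin down $h$ from the area constraint $|M|=1$ and then read off $\A_2(M)$ via the identity $|\Om\triangle E|=2|\Om\setminus E|$ valid when $|\Om|=|E|$, exploiting a cancellation between the constraint and the asymmetry.

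First I would fix coordinates. The balls $B_\pm$ have measure $1/2$, hence radius $r=1/\sqrt{2\pi}$, and are centered at $(\mp r,0)$ so as to be tangent at the origin. The stadium $\hull(\Theta)$ decomposes as the union of these two disks with the central rectangle $[-r,r]\times[-r,r]$. For $0<h<r$, a direct slice-area computation using the standard antiderivative $\int_{-h}^{h}\sqrt{r^2-y^2}\,dy=r^2\arcsin(h/r)+h\sqrt{r^2-h^2}$ gives
\begin{equation*}
|M|=|\hull(\Theta)\cap\{|y|\leq h\}|=2r^2\arcsin(h/r)+2h\sqrt{r^2-h^2}+4rh.
\end{equation*}
Imposing $|M|=1$ with $r=1/\sqrt{2\pi}$ produces a single transcendental equation in $h$, whose numerical solution (easily checked by substitution) is $h\approx 0.336$.

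Next I would identify the optimal tangent pair for $M$ in the definition \eqref{2fras}. By Lemma~\ref{lemma.theta}, the infimum is realized by two tangent balls of measure $1/2$. Since $M$ is invariant under reflection across both coordinate axes, a standard symmetrization argument shows that one may restrict to tangent pairs sharing these symmetries; this forces the tangent point at the origin and the centers on the $x$-axis, i.e.\ the pair is exactly $\Theta$. Using $|M|=|\Theta|=1$ and \eqref{equivalent}, one has $\A_2(M)=|M\triangle\Theta|=2|M\setminus\Theta|$. The set $M\setminus\Theta$ is the portion of the strip $[-r,r]\times[-h,h]$ not covered by the two disks, so subtracting the contributions of the right half of $B_-$ and the left half of $B_+$ from the $4rh$-area rectangle gives
\begin{equation*}
|M\setminus\Theta|=4rh-2r^2\arcsin(h/r)-2h\sqrt{r^2-h^2}.
\end{equation*}

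The conclusion is then a clean cancellation. The area constraint rewrites as $2r^2\arcsin(h/r)+2h\sqrt{r^2-h^2}=1-4rh$, and substituting into the above expression eliminates the transcendental terms, leaving
\begin{equation*}
\A_2(M)=2|M\setminus\Theta|=2\bigl(4rh-(1-4rh)\bigr)=16rh-2=\frac{16h}{\sqrt{2\pi}}-2\approx 0.147.
\end{equation*}
The only genuinely delicate step I expect is the symmetry reduction to $\Theta$: Lemma~\ref{lemma.theta} only narrows the minimizing pair to \emph{some} tangent configuration, so one must rule out that translating the tangent point along the $x$-axis, or tilting the axis of tangency, could lower $|M\triangle(B_-\cup B_+)|$; fortunately the two-fold reflection symmetry of $M$ makes this transparent once noted.
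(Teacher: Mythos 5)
Your proof is correct, and the core logic matches the paper's: impose $|M|=1$ to obtain a transcendental equation for $h$, use the identity $\A_2(M)=2|M\setminus\Theta|$ coming from $|M|=|\Theta|$, and let the area constraint cancel the transcendental terms, leaving the linear expression $16h/\sqrt{2\pi}-2$. What differs is only the decomposition used to compute the areas: the paper introduces the angle $\alpha=\arccos(h\sqrt{2\pi})$ and expresses $|M\setminus\Theta|$ and $|\Theta\setminus M|$ as four copies each of a region $X$ (rectangle minus triangle minus circular sector) and a cap $Y$, whereas you compute $|M|$ and $|M\setminus\Theta|$ directly by horizontal slicing with the $\arcsin$ antiderivative; the two computations are equivalent, and yours is arguably a bit more streamlined since it avoids the auxiliary angle. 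One point where you are actually more careful than the paper: you explicitly flag that Lemma~\ref{lemma.theta} only guarantees that \emph{some} tangent pair is optimal and invoke the double reflection symmetry of $M$ to reduce to $\Theta$, whereas the paper simply declares this step ``clear.''
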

\begin{proof}
From Lemma~\ref{lemma.theta} and Definition~\ref{mobile} it is clear that $\A_2(M)=|M \triangle \Theta|$.
Denoting by $X$ the region delimited by the points $O,Q,S$ (see Figure~\ref{figure.2}), and by $Y$ the set $\{(x,y)\in \Theta\;:\;x<0,\; y>h\}$ yields that $|M\setminus \Theta|=4|X|$ and $|\Theta\setminus M|=4|Y|$, therefore, to compute $\A_2(M)$ it is sufficient to compute the area of $X$ and of $Y$. To this purpose, we introduce the smallest angle $\alpha$ made by the radius $PQ$ and the segment $PR$, where the height $h$ of Definition~\ref{mobile} is linked to $\alpha$ by
\begin{equation}\label{link}
h=\frac{1}{\sqrt{2\pi}}\cos \alpha,
\end{equation}
since $\Theta$ is made by two balls of radius $1/\sqrt{2\pi}$.
Now the area of the rectangle $OPRS$, of the triangle $PQR$ and of the circular sector $OPQ$ are, respectively,
\[
|OPRS|=\frac{h}{\sqrt{2\pi}}, \quad |PQR|=\frac{h}{2}\sqrt{\frac{1}{2\pi}-h^2},\quad  |OPQ|=\frac{1}{4\pi}\Big(\frac{\pi}{2}-\alpha\Big),
\] 
and then
\[
|X|=\frac{h}{\sqrt{2\pi}}-\frac{h}{2}\sqrt{\frac{1}{2\pi}-h^2}-\frac{1}{4\pi}\Big(\frac{\pi}{2}-\alpha\Big), \quad |Y|=\frac{\alpha}{2\pi}-h\sqrt{\frac{1}{2\pi}-h^2}.
\]
The constraint $|M|=|\Theta|=1$ yields \eqref{equivalent} with $E=\Theta$, thus imposing $|X|=|Y|$ and recalling \eqref{link} the height $h$ have to satisfy the following equality:
\[
h\sqrt{\frac{1}{2\pi}-h^2}+\frac{2h}{\sqrt{2\pi}}-\frac{\arccos(\sqrt{2\pi} h)}{2\pi}=\frac{1}{4},
\]
which is solved by $h\approx 0.336$. Moreover, plugging this equation and \eqref{link} into the formula for $|Y|$ we obtain
$\A_2(M)=8|X|=8|Y|$ which provides \eqref{hdjd}.
\end{proof}
\begin{figure}[t]
\centering
\definecolor{uuuuuu}{rgb}{0.267,0.267,0.267}
\begin{tikzpicture}[scale=1.7, line cap=round,line join=round,>=triangle 45,x=1.0cm,y=1.0cm]
\clip(-2.58,-1.25) rectangle (2.47,1.29);
\draw [shift={(-1.0,0.0)},line width=0.0pt,fill=black,fill opacity=0.05] (0,0) -- (57.14:0.26) arc (57.14:90.0:0.26) -- cycle;
\draw [line width=0.4pt] (-1.0,0.0) circle (1.0cm);
\draw [line width=0.4pt] (1.0,0.0) circle (1.0cm);
\draw [line width=0.4pt] (-1.54,0.84)-- (1.54,0.84);
\draw [line width=0.4pt] (1.54,-0.84)-- (-1.54,-0.84);
\draw [shift={(-1.0,0.0)},line width=0.4pt]  plot[domain=2.14:4.14,variable=\t]({1.0*1.0*cos(\t r)+-0.0*1.0*sin(\t r)},{0.0*1.0*cos(\t r)+1.0*1.0*sin(\t r)});
\draw [shift={(1.0,0.0)},line width=0.4pt]  plot[domain=-0.99:0.99,variable=\t]({1.0*1.0*cos(\t r)+-0.0*1.0*sin(\t r)},{0.0*1.0*cos(\t r)+1.0*1.0*sin(\t r)});
\draw (-1.0,0.0)-- (-0.46,0.84);
\draw [line width=0.4pt] (-1,0.84)-- (-1.0,0.0);
\draw [line width=1.2pt] (-0.46,0.84)-- (-1.5425863986500215,0.84);
\draw [shift={(-1.0,0.0)},line width=1.2pt]  plot[domain=0.99:2.14,variable=\t]({1.0*1.0*cos(\t r)+-0.0*1.0*sin(\t r)},{0.0*1.0*cos(\t r)+1.0*1.00*sin(\t r)});
\draw (-0.05,0.86) node[anchor=north west] {$X$};
\draw (-1.3,1.3) node[anchor=north west] {$Y$};
\draw [line width=1.20pt] (-0.46,0.84)-- (0.0,0.84);
\draw [shift={(-1.0,0.0)},line width=1.202pt]  plot[domain=0.0:0.99,variable=\t]({1.0*1.0*cos(\t r)+-0.0*1.0*sin(\t r)},{0.0*1.0*cos(\t r)+1.0*1.0*sin(\t r)});
\draw [line width=1.2pt] (-0.0,-0.0)-- (0.0,0.84);
\draw (1.39,0.40) node[anchor=north west] {$M$};
\draw [line width=0.4pt] (-2.36,0.0)-- (2.33,0.0);
\draw [line width=0.4pt] (0.0,-1.11)-- (0.0,1.2);
\begin{scriptsize}
\draw [color=black] (-1.0,0.0)-- ++(-1.0pt,-1.0pt) -- ++(2.0pt,2.0pt) ++(-2.0pt,0) -- ++(2.0pt,-2.0pt);
\draw[color=black] (-1.1,-0.1) node {$P$};
\draw [color=black] (1.0,0.0)-- ++(-1.0pt,-1.0pt) -- ++(2.0pt,2.0pt) ++(-2.0pt,0) -- ++(2.0pt,-2.0pt);
\draw [fill=uuuuuu] (-0.0,-0.0) circle (0.7pt);
\draw[color=uuuuuu] (0.09,0.07) node {$O$};
\draw [fill=uuuuuu] (-0.4574136013499784,0.84) circle (0.7pt);
\draw[color=uuuuuu] (-0.43,0.95) node {$Q$};
\draw [fill=uuuuuu] (-1,0.84) circle (0.7pt);
\draw[color=uuuuuu] (-1.1,0.92) node {$R$};
\draw[color=black] (-0.9,0.3) node {$\alpha$};
\draw [fill=uuuuuu] (0.0,0.84) circle (0.7pt);
\draw[color=uuuuuu] (0.1,0.93) node {$S$};
\draw[color=uuuuuu] (-1.1,0.5) node {$h$};
\end{scriptsize}
\end{tikzpicture}
\caption{The Fraenkel 2-asymmetry of the mobile $M$.}\label{figure.2}
\end{figure}
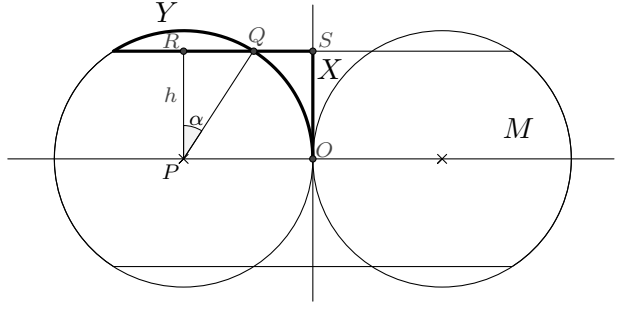

\begin{theorem}[Isoperimetric inequality for the Fraenkel 2-asymmetry]\label{teo.iso}
Among all convex open planar sets with unit area, the \emph{mobile} has the minimum Fraenkel 2-asymmetry, that is for every convex open set $\Omega\subset\R^2$ with $|\Omega|=1$
\begin{equation}\label{isofras}
\A_2(\Omega)\geq \A_2(M),
\end{equation}
and equality holds if and only if $\Omega=M$.
\end{theorem}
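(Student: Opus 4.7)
The argument combines a Steiner symmetrization with a slice-wise analysis that reduces the claim to a one-dimensional variational inequality.

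By Lemma~\ref{lemma.theta}, after a rotation we may assume that the optimal pair of disjoint balls realizing $\A_2(\Omega)$ is the standard tangent pair $\Theta$ (with centers on the $x$-axis, tangent at the origin), so $\A_2(\Omega)=|\Omega\triangle\Theta|$. Each vertical slice $\Theta\cap\{x=x_0\}$ is symmetric about $y=0$, so $\Theta$ is Steiner symmetric with respect to the $x$-axis. Hence, Steiner-symmetrizing $\Omega$ in the $y$-direction yields a convex set $\Omega^*$ of unit area with $|\Omega^*\triangle\Theta|\le|\Omega\triangle\Theta|$ by the standard rearrangement inequality, and we may assume $\Omega$ is itself symmetric with respect to the $x$-axis.

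Set $r=1/\sqrt{2\pi}$ and $s(y)=\sqrt{r^2-y^2}$ for $|y|\le r$, and let $\omega(y)$ denote the length of the horizontal slice $\Omega_y$. By Brunn--Minkowski, $\omega$ is nonnegative, even, concave on its support, with $\int\omega\,dy=1$. Since $\Theta_y=[-r-s,-r+s]\cup[r-s,r+s]$ for $|y|\le r$, a direct case analysis over the placement of an interval of length $\omega$ gives the pointwise bound $|\Omega_y\triangle\Theta_y|\ge H(\omega(y),y)$, where
\begin{equation*}
H(\omega,y)=\begin{cases}4s-\omega, & 0\le\omega\le 2s,\\ \omega, & 2s\le\omega\le 2r,\\ 4r-\omega, & 2r\le\omega\le 2(r+s),\\ \omega-4s, & \omega\ge 2(r+s),\end{cases}
\end{equation*}
with $H(\omega,y)=\omega$ for $|y|>r$. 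A direct verification shows that the mobile's profile $\omega_M(y)=2(r+s(y))$ on $[-h,h]$ (extended by zero) saturates the pointwise bound at every $y$, so $\int H(\omega_M,y)\,dy=\A_2(M)$. It therefore suffices to prove the one-dimensional inequality
\begin{equation*}
\int H(\omega(y),y)\,dy \;\ge\; \A_2(M)
\end{equation*}
for every admissible $\omega$, with equality if and only if $\omega=\omega_M$. Combined with the equality case of the slice-wise bound (which forces each slice of $\Omega$ to be centrally placed), this yields $\Omega=M$ in~\eqref{isofras}.

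\textbf{The main obstacle} is this variational step. The integrand $H(\cdot,y)$ is piecewise linear in $\omega$ with a local maximum at $\omega=2r$ and two local minima (at $\omega=2s$ with value $2s$, and at $\omega=2(r+s)$ with value $2r-2s$); in particular, $H(\cdot,y)$ is not convex in $\omega$, and no single subgradient of $H(\cdot,y)$ at $\omega_M(y)$ yields a valid global linear lower bound, so a direct Jensen-type argument fails. The unconstrained pointwise minimizer of $H(\cdot,y)$ is discontinuous in $y$ and hence inadmissible as a concave function. The proof must therefore combine the piecewise-linear structure of $H$ with the concavity of $\omega$ and the area constraint $\int\omega=1$ to identify the bang-bang profile $\omega_M$ as the unique minimizer, via a careful case analysis on where $\omega$ lies relative to the breakpoints of $H(\cdot,y)$.
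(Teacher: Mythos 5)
Your approach is genuinely different from the paper's: you symmetrize and slice, reducing the geometric problem to a one-dimensional variational inequality for the width profile $\omega$, whereas the paper argues by direct shape deformation --- it proves existence of a minimizer, shows it must be contained in $\hull(\Theta)$ by a dilation argument, shows its boundary outside $\partial\Theta$ consists of two segments by a contraction argument, and finally reduces from a cone to a strip by a rotation argument, yielding $\Omega^*=E_{\mathrm{strip}}\cap\hull(\Theta)=M$. Your set-up is sound: Lemma~\ref{lemma.theta} lets you fix $\Theta$; $\Theta$ is Steiner-symmetric about the $x$-axis so symmetrization does not increase $|\Omega\triangle\Theta|$ and preserves convexity; the formula for $H$ is correct; and $\omega_M$ does saturate $H$ pointwise, so $\int H(\omega_M,y)\,dy=\A_2(M)$.

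However, the proposal has a genuine gap, and you name it yourself: the one-dimensional inequality
\[
\int H(\omega(y),y)\,dy \;\ge\; \A_2(M)\qquad\text{for all even, concave-on-support }\omega\ge 0\text{ with }\int\omega=1
\]
is the entire content of the theorem in your reduction, and it is left unproved. Diagnosing why Jensen fails and why the pointwise minimizer is inadmissible is not a proof; the ``careful case analysis'' you invoke is precisely the missing argument, and it is not a routine one --- $H(\cdot,y)$ has two wells whose relative depth changes with $y$, and the constraints (concavity plus $\int\omega=1$) interact nontrivially with that structure. Worse, your 1D inequality is \emph{a priori stronger} than the theorem: you have discarded all coupling between slices except concavity of $\omega$, whereas an actual convex $\Omega$ must place its slices consistently. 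It is therefore not even clear in advance that the 1D inequality is true, and a proof must establish this, not assume it. (The pointwise --- nonconcave --- minimizer of $H$ has $\int\omega\approx 1.05>1$, so the area constraint is nearly inactive; this makes the estimate delicate rather than comfortable.) Finally, the equality case of Steiner symmetrization, which you would need to conclude $\Omega=M$ rather than merely $\Omega^*=M$, is asserted but not addressed. As it stands, the proposal is an interesting reformulation, but it replaces the theorem with an unresolved problem of comparable difficulty.
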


\begin{proof}
For every convex and open set $\Omega$, moving and rotating the coordinate system, from Lemma \ref{lemma.theta} we may always assume the minimum in the definition of the Fraenkel 2-asymmetry to be reached by $\Theta$, namely the two balls of equal measure which are tangent in the origin and with centers on the $x$-axis. Therefore, recalling \eqref{equivalent} with $E=\Theta$ and noticing that $\A_2(M)=2{|\Theta\setminus M|}$, in order to prove \eqref{isofras}  it is sufficient to prove the auxiliary inequality
\begin{equation}\label{auxiliary}
|\Theta\setminus \Omega|\geq |\Theta\setminus M|\quad \text{for every convex open set $\Omega\subset\R^2$ with $|\Omega|=1$}.
\end{equation}
We show this inequality in several steps, starting with the existence result, and then listing several necessary conditions that have to be satisfied by an optimal set. 

\smallskip
\emph{i) Existence of an optimal set.}

The existence of an optimal set $\Om^*$ for the minimization problem 
\begin{equation}\label{auxiliary2}
\min \{|\Theta\setminus\Omega| : \; \text{$\Omega\subset \R^2$, $\Omega$ open and convex, $|\Omega|=1$} \},
\end{equation}
is a straightforward consequence of the direct method of the Calculus of Variations. 
Indeed take a minimizing sequence $\{\Om_n\}$ made of open and convex sets of unit area. The diameter $\diam(\Om_n)$ is uniformly bounded, otherwise from the area and the convexity constraints $|\Om_n\cap \Theta|\downarrow 0$, and so $|\Theta\setminus \Om_n|\uparrow 1$, which is strictly greater to the value attained by the mobile $|\Theta\setminus M|\approx 0.073$ (see Lemma~\ref{lemma.mobile}).

Therefore, we can apply Proposition 2.4.3 and 2.4.4 of \cite{BB} to infer that we are minimizing a {continuous} functional $\Om\mapsto |\Theta \setminus \Om|$ over a {compact} class of sets with respect to the uniform convergence.

\smallskip
\emph{ii) An optimal set is included in the stadium $\hull(\Theta)$}. 

Assume, by contradiction, that $\Om^*$ is not contained in the stadium $\hull(\Theta)$, namely the following strict inclusion holds:
\begin{equation}\label{contradiction1}
\Om^*\cap \hull(\Theta)\subset\Om^*.
\end{equation}
Consider the non-decreasing (in the sense of set inclusion) family of dilated sets 
\begin{equation}\label{dilated}
\Omega(r):=\hull(r\Omega^*\cap \Theta), \quad \text{with $r\in[1,\infty)$}.
\end{equation}
For $r=1$, the corresponding set $\Om(1)\subset \Omega^*$ and, in particular, $|\Omega(1)|<|\Omega^*|$. Indeed, the convex set $\Om^*\cap \hull(\Theta)$ contains $\Omega^*\cap \Theta$, thus, by definition of the convex envelope, $\hull(\Omega^*\cap \Theta)\subseteq \Om^*\cap \hull(\Theta)$, which combined with the assumption \eqref{contradiction1} provides the strict inclusion $\Omega(1)\subset\Om^*$.
For $r=\diam \Theta$, $\Omega({\diam \Theta})\supset \hull(\Theta)$, and, in particular, $|\Omega({\diam \Theta})|>|\Omega^*|$. Since the function $|\Omega(r)|$ is continuous over $[1,\diam(\Theta)]$, i.e. for a sequence of real numbers $r_n\to r$, the convex sets ${\Omega(r_n)}$  converge to ${\Omega(r)}$ for the $L^1$ convergence of characteristic functions, there exists ${r_1}>1$ such that the convex set $\Omega(r_1)$ defined by \eqref{dilated} with $r=r_1$ satisfies the volume constraint $|\Omega({r_1})|=|\Omega|=|\Theta|$. Therefore, $\Omega(r_1)$ is an admissible competitor in $\eqref{auxiliary}$ which, by definition \eqref{dilated}, is so that $\Om^*\cap \Theta\subset  \Om({r_1})\cap\Theta$ (note that $r_1$ is strictly greater than $1$). Passing to the complementary sets, this strict inclusion yields that
\[
|\Theta\setminus \Omega^*| > |\Theta\setminus \Omega({r_1})|,
\]
contradicting the optimality of $\Omega^*$. 

\smallskip
\emph{iii)  The closure of the boundary of an optimal set which is not included in $\partial \Theta$ is made by two connected components, each of which is a segment}. 

From the previous step ii), in particular \eqref{dilated}, we know that the boundary of an optimal set which is not included in the two balls $\Theta$ is made by exactly two connected components, each of which is a segment. Then, we can consider the two supporting lines of $\Om^*$ passing through these two segments, denoting by $E$ the region between these two lines containing $\Omega^*$. The region $E$ is a strip, in the case the supporting lines are parallel, otherwise a cone with a vertex outside $\overline{\hull(\Theta)}$ (if these lines would intersect in a point into $\overline{\hull(\Theta)}$, we would have that $|E\cap\hull(\Theta)|<|\Theta|$ and since $\Omega^*\subseteq E\cap \hull(\Theta)$ this would contradict the area constraint). Now assuming, by contradiction, that point iii) does not hold, we would have the strict inequality 
\begin{equation}\label{contradiction2}
|E\cap \hull(\Theta)|>|\Omega^*|.
\end{equation}
Then, we consider the contracted family of convex sets 
\begin{equation}\label{contracted}
\Omega(r):=rE\cap \hull(\Theta), \quad \text{with $r\in(0,1)$},
\end{equation}
where now $rE$ denotes the contraction of the cone $E$ of factor $r$ that keeps fixed its vertex, and we notice that the function $|\Omega(r)|$ is continuous over $(0,1)$, i.e. for a sequence of real numbers $r_n\to r$ in $(0,1)$, the convex sets ${\Omega(r_n)}$ converge to ${\Omega(r)}$ for the $L^1$ convergence of characteristic functions. Moreover, as $r\to1$, from \eqref{contradiction2}, we have that $|\Omega(r)|>|\Omega^*|$, while as $r\to0$, the set $rE$ shrinks to a line, thus in particular, $|\Omega(r)|\to 0$. Therefore, there exists $r_2<1$ such that the convex set $\Omega(r_2)$ defined in \eqref{contracted} satisfies the area constraint $|\Omega(r_2)|=|\Omega|$ and the strict inclusion $\Omega(r_2)\setminus\Theta\subset\Omega^*\setminus\Theta$ gives $|\Omega(r_2)\setminus\Theta|<|\Omega^*\setminus\Theta|$ (since $r_2$ is strictly smaller than $1$). Recalling \eqref{equivalent} with $E=\Theta$ yields
\[
|\Theta\setminus \Omega^*| > |\Theta\setminus \Omega({r_2})|,
\]
contradicting the optimality of $\Omega^*$. 

\smallskip
\emph{iv)  The mobile is the optimal set}.

Combining the steps i) and ii) with iii) we know that an optimal set $\Om^*$ has the following form
\begin{equation}\label{cone}
\Omega^*=E\cap\hull(\Theta),
\end{equation}
where $E$, as before, is a strip or a cone with a vertex outside $\overline{\hull(\Theta)}$ such that $|E\cap\hull(\Theta)|=1$.
Now, the boundary of this set is made by four pieces: two segments and two arc of circles. If for every cone $E_{cone}$ one can find a strip $E_\text{strip}$ which decreases the functional in \eqref{auxiliary2}, then optimizing among all sets of the form $\Omega^*=E_{\text{strip}}\cap\hull(\Theta)$ would give that the symmetric strip is the best one, that is, according to Definition \ref{mobile}, the mobile.
Indeed, assume that the optimal set $\Om^*$ as in \eqref{cone} is generated by a cone $E=E_{\text{cone}}$, and, up to a change of coordinates we may assume that it has on the boundary a segment which belongs to the half-plane $\{y>0\}$ and that is not parallel to the line $\{y=0\}$. We show how it is possible to rotate this segment decreasing the functional in \eqref{auxiliary2}. 
Let us call $l_1$ the supporting line generating this segment, $P$ be the point on this line with $x_{P}=0$, and assume that the following quantity is positive
\[
d:=|\{y>0\}\cap \{x>0\}\cap (E\cap \hull(\Theta))|-|\{y>0\}\cap \{x<0\}\cap (E\cap \hull(\Theta))|> 0.
\] 
Rotating the line $l_1$ around the point $P$, so as to decrease $d$ until $d=0$, yields the line $l_2$ parallel to $\{y=0\}$. Calling $E_2$ the new set obtained from $E_{\text{cone}}$ with $l_2$ in place of $l_1$ we can see that the area has been increased, namely $|E_2\cap \hull(\Theta)|>|E_{\text{cone}}\cap \hull(\Theta)|$, and moreover $|\hull(\Theta)\setminus E_2|<|\hull(\Theta)\setminus E_{\text{cone}}|$. Now, we replace the line $l_2$ with a parallel line $l_3$ and define a new set $E_3$ so that $|E_3\cap \hull(\Theta)|=|E_{\text{cone}}\cap \hull(\Theta)|$. Therefore we have constructed a set $E_3$ which satisfies the area constraint and so that the functional has been decreased $|\hull(\Theta)\setminus E_3|<|\hull(\Theta)\setminus E_{\text{cone}}|$. The same strategy can be adapted if the quantity $d<0$, and for the other segment on the boundary.

At the end, we constructed a better competitor $E_\text{strip}$ and optimizing among all $E_\text{strip}$ will provide that the mobile is the unique minimizer, and the theorem is concluded.
\end{proof}

\begin{remark}\label{rem.general}
If $N>2$ it seems more difficult to find the optimal set in \eqref{min2fras}. Nevertheless, some of the above results can immediately generalized to all dimensions, such as Lemma~\ref{lemma.theta} as well as the steps i), ii) and iii) in the proof of Theorem~\ref{teo.iso}. As a by product, for $N>2$, an optimal set $\Om^*$ in \eqref{min2fras} is so that $\Om^*=E\cap \hull(\Theta)$ where $E$ is an $N$ dimensional cone or strip.
\end{remark}

\section{Convex combinations of the lowest Dirichlet eigenvalues}\label{sec.comb}

\subsection{Basic properties} 

We start discussing some properties of the functional $F_t$ defined in \eqref{convexcomb} that follow from the definition of minimality: we say that $\Om_t$ is a minimizer in \eqref{P} if for every admissible competitor $\Om$
\begin{equation}\label{minimality}
t\la_1(\Om_t)+(1-t)\la_2(\Om_t)\leq t\la_1(\Om)+(1-t)\la_2(\Om),
\end{equation}
and equivalently, rearranging the terms
\begin{equation}\label{minimality2}
\la_1(\Om_t)-\la_1(\Om)+\la_2(\Om)-\la_2(\Omega_t)\leq \frac{1}{t}\Big(\la_2(\Om)-\la_2(\Omega_t)\Big).
\end{equation}

\begin{lemma}
For $s,t\in (0,1)$ with $s<t$, let $\Om_{s}$ and $\Om_{t}$ be minimizers of the functionals $F_{s}$ and $F_{t}$, respectively. The following properties hold:
\begin{itemize}
\item[i)] \emph{the gap non-decreases} $\la_2(\Om_{s})-\la_1(\Om_{s})\leq \la_2(\Om_{t})-\la_1(\Om_{t})$;
\item[ii)] \emph{the first eigenvalue non-increases} $\la_1(\Om_{s})\geq\la_1(\Om_{t})$;
\item[iii)] \emph{the second eigenvalue non-decreases}  $\la_2(\Om_{s})\leq \la_2(\Om_{t})$;
\item[iv)] \emph{a rescaled convex combination increases} ${F_{s}(\Om_{s})}/{(1-s)}<{F_{t}(\Om_{t})}/{(1-t)}$;
\item[v)] \emph{the convex combination decreases} $F_{s}(\Om_{s})>F_{t}(\Om_{t})$.
\end{itemize}
\end{lemma}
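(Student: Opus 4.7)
The plan is to extract everything from the two minimality conditions
\[
F_s(\Omega_s)\le F_s(\Omega_t),\qquad F_t(\Omega_t)\le F_t(\Omega_s),
\]
which unfold to
\begin{equation}\label{plan:two}
s(\lambda_1(\Omega_s)-\lambda_1(\Omega_t))\le (1-s)(\lambda_2(\Omega_t)-\lambda_2(\Omega_s)),\quad t(\lambda_1(\Omega_s)-\lambda_1(\Omega_t))\ge (1-t)(\lambda_2(\Omega_t)-\lambda_2(\Omega_s)).
\end{equation}
Introducing the shorthand $A:=\lambda_1(\Omega_s)-\lambda_1(\Omega_t)$ and $B:=\lambda_2(\Omega_t)-\lambda_2(\Omega_s)$, these become $sA\le(1-s)B$ and $tA\ge(1-t)B$.

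For i), I would simply subtract the second inequality in \eqref{plan:two} from the first (or, equivalently, add the two original minimality inequalities after rearrangement): the $s$- and $t$-coefficients of $A$ and $B$ combine to give $(s-t)A\le(t-s)B$, i.e. $-A\le B$ since $t-s>0$, which is exactly $\lambda_2(\Omega_s)-\lambda_1(\Omega_s)\le\lambda_2(\Omega_t)-\lambda_1(\Omega_t)$.

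For ii) and iii), the strategy is an argument by contradiction built on the strict monotonicity of the function $x\mapsto x/(1-x)$ on $(0,1)$. Suppose $A<0$. Then the second inequality in \eqref{plan:two} gives $B\le \tfrac{t}{1-t}A$, and the first yields $B\ge\tfrac{s}{1-s}A$; chaining them forces $\tfrac{s}{1-s}A\le\tfrac{t}{1-t}A$. Dividing by the negative number $A$ flips the sign and contradicts $s<t$. Hence $A\ge0$, proving ii). An entirely symmetric contradiction (assuming $B<0$, deriving $\tfrac{1-s}{s}B\ge A\ge\tfrac{1-t}{t}B$ and dividing by $B<0$) yields iii). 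This pair of arguments is the main obstacle, but it is really just careful bookkeeping of signs.

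For iv), note that minimizing $F_s$ is equivalent to minimizing $F_s/(1-s)=\tfrac{s}{1-s}\lambda_1+\lambda_2$, so by testing with $\Omega_t$,
\[
\frac{F_s(\Omega_s)}{1-s}\le \frac{s}{1-s}\lambda_1(\Omega_t)+\lambda_2(\Omega_t)<\frac{t}{1-t}\lambda_1(\Omega_t)+\lambda_2(\Omega_t)=\frac{F_t(\Omega_t)}{1-t},
\]
where the strict inequality uses $s<t$ together with the strict positivity of $\lambda_1(\Omega_t)>0$. Finally, for v), use $F_t(\Omega_t)\le F_t(\Omega_s)$ and compute
\[
F_s(\Omega_s)-F_t(\Omega_s)=(t-s)\bigl(\lambda_2(\Omega_s)-\lambda_1(\Omega_s)\bigr)>0,
\]
where strictness uses $t>s$ and the strict gap $\lambda_1(\Omega_s)<\lambda_2(\Omega_s)$, which follows from the connectedness of the minimizer $\Omega_s$ recalled in the introductory theorem. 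Combining, $F_s(\Omega_s)>F_t(\Omega_s)\ge F_t(\Omega_t)$, proving v).
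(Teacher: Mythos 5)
Your proof is correct and uses the same underlying device as the paper's: testing each minimizer against the other in the two minimality inequalities $F_s(\Omega_s)\le F_s(\Omega_t)$ and $F_t(\Omega_t)\le F_t(\Omega_s)$. Point i) is obtained identically (summing the two inequalities and cancelling $t-s>0$), and point iv) is the same computation (divide the $s$-minimality by $1-s$ and use $s/(1-s)<t/(1-t)$ together with $\lambda_1(\Omega_t)>0$). The differences are in ii), iii), v). For ii) and iii) you give a self-contained chaining argument, $\tfrac{s}{1-s}A\le B\le\tfrac{t}{1-t}A$ (and the analogous chain bounding $A$ between multiples of $B$), and divide by the putatively negative quantity, exploiting the strict monotonicity of $x\mapsto x/(1-x)$ on $(0,1)$; the paper instead plugs the hypothetical sign of $A$ back into the $t$-minimality inequality to get a contradiction for ii), and then derives iii) from ii) and the $s$-minimality inequality. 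For v) you compute directly $F_s(\Omega_s)-F_t(\Omega_s)=(t-s)\bigl(\lambda_2(\Omega_s)-\lambda_1(\Omega_s)\bigr)>0$ and then apply $F_t(\Omega_s)\ge F_t(\Omega_t)$, while the paper runs the same inequalities as a contradiction; both correctly invoke connectedness of $\Omega_s$ (from the cited theorem) to get the strict gap $\lambda_1(\Omega_s)<\lambda_2(\Omega_s)$. These are mild stylistic variants of the same argument; your treatments of ii), iii) and v) are, if anything, slightly cleaner since they avoid nesting one contradiction inside another.
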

\begin{proof}
The minimality \eqref{minimality} of $\Omega_{s}$ for $F_{s}$ with the competitor $\Om=\Om_t$ writes as 
\begin{equation}\label{m1}
s\la_1(\Om_{s})+(1-{s})\la_2(\Om_{s})\leq {s}\la_1(\Om_{t})+(1-{s})\la_2(\Om_{t}),
\end{equation}
while the minimality \eqref{minimality} of $\Omega_{t}$ for $F_{t}$ with the competitor $\Om=\Om_s$, as
\begin{equation}\label{m2}
t\la_1(\Om_{t})+(1-t)\la_2(\Om_{t})\leq t\la_1(\Om_{s})+(1-t)\la_2(\Om_{s}).
\end{equation}
Summing up \eqref{m1} with \eqref{m2} we get to 
\begin{equation}\label{eq1}
(t-s)(\la_1(\Om_{t})-\la_1(\Om_{s}))\leq (t-s)(\la_2(\Om_{t})-\la_2(\Om_{s})),
\end{equation}
and by the assumption $t-s>0$, we immediately obtain the first point i) of this lemma. If $\la_1(\Om_{s})<\la_1(\Om_{t})$ the right-hand side of~\eqref{eq1} would be positive, hence $\la_2(\Om_{s})<\la_2(\Om_{t})$, which would contradict \eqref{m2}.
Then, necessarily, the second point ii) of this lemma holds. Moreover, if $\la_2(\Om_{s})>\la_2(\Om_{t})$, using point ii) we get a contradiction with \eqref{m1}, and so also the third point iii) holds. 
Now, from \eqref{m1} and the fact that ${s/(1-{s})}<t/(1-{t})$, we have
\[
\frac{{s}}{(1-{s})}\la_1(\Om_{s})+\la_2(\Om_{s})\leq \frac{{s}}{(1-{s})}\la_1(\Om_{t})+\la_2(\Om_{t})< 
\frac{{t}}{(1-{t})}\la_1(\Om_{t})+\la_2(\Om_{t}).
\]
Therefore, $F_{t}(\Om)/(1-{t})=t/(1-t)\la_1(\Omega)+\la_2(\Omega)$ and we get point iv). To prove the last point v) we assume, by contradiction, that $F_{s}(\Om_{s})\leq F_{t}(\Om_{t})$. Then recalling \eqref{m2} we have
\[
s\la_1(\Om_{s})+(1-s)\la_2(\Om_{s})\leq t\la_1(\Om_{t})+(1-t)\la_2(\Om_{t})\leq t\la_1(\Om_{s})+(1-t)\la_2(\Om_{s}) ,
\]
which leads to
\[
(t-s)\la_2(\Om_{s})\leq(t-s)\la_1(\Om_{s}).
\]
The assumption $t-s>0$ implies that $\la_1(\Om_{s})=\la_2(\Om_{s})$, contradicting the connectedness of $\Omega_{s}$ proved in \cite{im}.
\end{proof}

\begin{lemma}
Let $s,t\in (0,1)$ with $s<t$. If $X$ is a minimizer of both $F_{s}$ and $F_{t}$, then $X$ is also a minimizer of $F_r$, for every $r\in(s,t)$. 
\end{lemma}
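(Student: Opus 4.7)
The plan is to exploit the fact that for any fixed set $\Omega$, the map $r \mapsto F_r(\Omega)$ is affine. Concretely, writing
\[
F_r(\Omega) = r\lambda_1(\Omega) + (1-r)\lambda_2(\Omega) = \lambda_2(\Omega) + r\bigl(\lambda_1(\Omega)-\lambda_2(\Omega)\bigr),
\]
I see that for any $r \in (s,t)$, setting $\alpha := (t-r)/(t-s) \in (0,1)$ gives $r = \alpha s + (1-\alpha)t$, and a direct computation shows
\[
F_r(\Omega) = \alpha F_s(\Omega) + (1-\alpha) F_t(\Omega)
\]
for every admissible $\Omega$. This is the only identity I need.

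Given this, the proof is one line: for any competitor $\Omega$ with $|\Omega|=1$, the hypothesis that $X$ minimizes both $F_s$ and $F_t$ yields $F_s(X) \leq F_s(\Omega)$ and $F_t(X) \leq F_t(\Omega)$. Taking the convex combination with weights $\alpha$ and $1-\alpha$ and using the affine identity above gives
\[
F_r(X) = \alpha F_s(X) + (1-\alpha) F_t(X) \leq \alpha F_s(\Omega) + (1-\alpha) F_t(\Omega) = F_r(\Omega),
\]
so $X$ minimizes $F_r$.

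There is essentially no obstacle here; the whole content is the observation that $F_r$ depends linearly on the parameter $r$, so that minimality propagates through convex combinations of the parameter. I would keep the proof to a couple of lines, just introducing $\alpha = (t-r)/(t-s)$, recording the affine identity, and applying it to the two minimality inequalities.
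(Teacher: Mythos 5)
Your proof is correct, and it takes a genuinely different route from the paper's. You observe the pointwise affine identity $F_r(\Omega) = \alpha F_s(\Omega) + (1-\alpha) F_t(\Omega)$ with $\alpha = (t-r)/(t-s)$, which lets minimality propagate immediately by taking the convex combination of the two minimality inequalities; no contradiction is needed and the argument stands on its own. The paper instead proceeds by contradiction: it posits a strictly better minimizer $Y$ of $F_r$ and, re-using the ``sum the two minimality inequalities'' device from the preceding lemma, derives
\[
\lambda_1(Y)-\lambda_1(X) < \lambda_2(Y)-\lambda_2(X)
\quad\text{and}\quad
\lambda_1(X)-\lambda_1(Y) < \lambda_2(X)-\lambda_2(Y),
\]
which contradict each other upon summation. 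Your approach is shorter, self-contained, and makes the underlying mechanism transparent (minimality of a parameter-affine family is stable under convex combinations in the parameter), and it generalizes verbatim to any such family; the paper's version is slightly longer but dovetails with the inequality \eqref{eq1} already established, keeping the whole subsection built on a single bookkeeping device. Both are valid; yours is arguably the cleaner choice in isolation.
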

\begin{proof}
We assume, by contradiction, that $X$ is not a minimizer of $F_r$, for some fixed $r\in(s,t)$, and call by $Y$ a minimizer of the corresponding functional $F_{r}$. As in the proof of the previous lemma, using the minimality of $X$ for $F_{s}$ and of $Y$ for $F_{r}$ we arrive to \eqref{eq1} with $t=r$, namely since $s<r$,
\[
(\la_1(Y)-\la_1(X))<(\la_2(Y)-\la_2(X)),
\]
and, similarly, using the minimality of $X$ for $F_{t}$ and again of $Y$ for $F_r$
\[
(\la_1(X)-\la_1(Y))<(\la_2(X)-\la_2(Y)),
\]
where the strict inequalities are a consequence of the assumption $F_r(Y)<F_r(X)$.
Summing up these two inequalities, we reach a contradiction.	
\end{proof}

\subsection{On the non-convexity of the minimizers}

Recently, Henrot and Oudet in \cite{ho} investigated the problem of minimizing the second eigenvalue of the Dirichlet-Laplacian among sets of fixed measure and with an additional \emph{convexity} contraint. 
Finding an explicit minimizer in this class seems a very difficult problem: a possible candidate to be the optimum is the stadium (i.e., the convex hull of two tangent balls), but this conjecture was refuted in \cite{ho}. Indeed any set which contains on the boundary some pieces of balls can not be a minimizer.
Nevertheless, in \cite{ho} it was proved the existence of a convex minimizer $\ho$ so that, for every open and \emph{convex} set $\Om\subset\R^N$ with unit area,
\begin{equation}\label{HO}
\la_2(\Om)\geq\la_2(\ho),
\end{equation} 
(cf. \eqref{HO} with the Krahn-Szeg\"o inequality, where no-convexity constraint is required). Notice that, since $\ho$ has no pieces of balls on its boundary, in particular $\ho\neq B$ and
\begin{equation*}
\omega_N^{2/N} j_{N/2}^2=\la_2(B)>\la_2(\ho).
\end{equation*}
In two dimensions, Oudet in \cite{o} and, more rencently, Antunes and Henrot in \cite{ah}, made some numerical computations, showing the shape of the optimal set $\ho$ and highlighting that $\ho$ is very close to the stadium, both from a geometrical and a numerical point of view; in particular
\begin{equation}\label{la2values}
\la_2(B_-\cup B_+)=2\pi j_{0}^2\approx36.336,\quad\la_2(\ho)\approx 37.987,\quad\la_2(\Om_{\text{stadium}})\approx 38.002,
\end{equation}
where $\Om_{\text{stadium}}$ is the stadium with $|\Om_\text{stadium}|=1$, i.e., a contracted version of the set $\hull(\Theta)$. 

We are now in position to prove Theorem \ref{teo.1}.

\begin{proof}[Proof of Theorem~\ref{teo.1}]
From the Krahn-Szeg\"o inequality \eqref{KS} and the connectedness of $\Om_t$ it follows that
\begin{equation}\label{positive3}
\la_2(B_-\cup B_+)<\la_2(\Om_t),
\end{equation}
which plugged into \eqref{minimality} with $\Om=B_-\cup B_+$ yields
\begin{equation}\label{positive2}
\la_1(\Om_t)<\la_1(B_-\cup B_+).
\end{equation}
Taking $\Om=B_-\cup B_+$ also in \eqref{minimality2} and dividing therein by the negative quantity $\la_2(B_-\cup B_+)-\la_2(\Omega_t)$  (recall \eqref{positive3}) we get to
\begin{equation}\label{const2}
\frac{\la_1(B_-\cup B_+)-\la_1(\Om_t)}{\la_2(\Om_t)-\la_2(B_-\cup B_+)}+1\geq \frac{1}{t}.
\end{equation}
From \eqref{positive3} and \eqref{positive2}, the ratio on the left-hand side of this inequality turns out to be a positive number; therefore, we can use the Faber-Krahn inequality \eqref{FK} to estimate $\la_1(\Om_t)$ at the numerator of this ratio. Moreover, if $\Om_t$ would be a \emph{convex} set, we could also use \eqref{HO} to estimate $\la_2(\Om_t)$ at the denominator of this ratio, obtaining the following uniform bound on $t$:
\begin{equation}\label{const}
t\geq \frac{1}{\frac{\la_1(B_-\cup B_+)-\la_1(B)}{\la_2(\ho)-\la_2(B_-\cup B_+)}+1}.
\end{equation}
Calling $T$ the quantity on the right-hand side of this inequality, the Krahn-Szeg\" o inequality for convex sets gives $\la_2(\ho)-\la_2(B_-\cup B_+)>0$, thus $T>0$. Therefore, if $t<T$, $\Omega_t$ can not be convex.
\end{proof}

The proof of Theorem~\ref{teo.1} is constructive and reveals an explicit expression for the threshold $T$ in terms of the eigenvalues of the Dirichlet-Laplacian. 
\begin{corollary}\label{cor3.3}
The threshold $T$ in Theorem~\ref{teo.1} has the following expression:
\begin{equation}\label{explicit}
T=1-\frac{(2^{2/N}-1)\la_1(B)}{\la_2(\ho)-\la_1(B)},
\end{equation}
where $\ho$ is a minimizer in \eqref{HO}.  In two dimensions, it turns out that 
\begin{equation}\label{bounds2}
T\geq 1-\frac{1}{1+2\betaks\mathcal{A}_2(M)^{9/2}}\approx 1.192\cdot 10^{-14},
\end{equation}
where the constants $\betaks$ and $\A_2(M)$ are as in~\eqref{constantKS} and~\eqref{isofras} respectively.
\end{corollary}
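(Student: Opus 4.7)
The strategy is straightforward: for \eqref{explicit}, I unwind algebraically the definition of $T$ coming from the proof of Theorem~\ref{teo.1}, using that $\la_1(B_-\cup B_+)=\la_2(B_-\cup B_+)=2^{2/N}\la_1(B)$ (by scaling, together with \eqref{KS}). For \eqref{bounds2}, I feed the resulting formula into a chain consisting of the quantitative Krahn--Szeg\"o inequality \eqref{QKSbha} and the isoperimetric inequality for the Fraenkel 2-asymmetry \eqref{isofras}.

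In detail, the proof of Theorem~\ref{teo.1} produced
\[
T=\frac{1}{\dfrac{\la_1(B_-\cup B_+)-\la_1(B)}{\la_2(\ho)-\la_2(B_-\cup B_+)}+1}.
\]
Substituting $\la_1(B_-\cup B_+)=\la_2(B_-\cup B_+)=2^{2/N}\la_1(B)$ turns the numerator of the inner fraction into $(2^{2/N}-1)\la_1(B)$ and the denominator into $\la_2(\ho)-2^{2/N}\la_1(B)$. Adding $1$ to the fraction and inverting yields
\[
T=\frac{\la_2(\ho)-2^{2/N}\la_1(B)}{\la_2(\ho)-\la_1(B)}=1-\frac{(2^{2/N}-1)\la_1(B)}{\la_2(\ho)-\la_1(B)},
\]
which is exactly \eqref{explicit}. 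Positivity of the denominator is ensured by $\la_2(\ho)\geq \la_2(B_-\cup B_+)=2^{2/N}\la_1(B)>\la_1(B)$.

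For \eqref{bounds2} in dimension $N=2$, the formula above specializes to $T=1-\la_1(B)/(\la_2(\ho)-\la_1(B))$. The stated inequality is equivalent, upon cross-multiplying (all quantities being positive) and using $\la_2(B_-\cup B_+)=2\la_1(B)$, to
\[
\frac{\la_2(\ho)}{\la_2(B_-\cup B_+)}-1\geq \betaks\,\A_2(M)^{9/2}.
\]
To establish this I would apply the quantitative Krahn--Szeg\"o inequality \eqref{QKSbha} to the convex set $\ho$ (which has unit area and satisfies $|\ho|=|B_-\cup B_+|=1$), obtaining $\la_2(\ho)/\la_2(B_-\cup B_+)-1\geq \betaks\,\A_2(\ho)^{9/2}$, and then invoke Theorem~\ref{teo.iso}, applicable precisely because $\ho$ is convex, to get $\A_2(\ho)\geq \A_2(M)$. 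Combining these two inequalities yields the desired bound, and the concluding numerical value follows by plugging in \eqref{constantKS} and \eqref{hdjd} and using $1-1/(1+x)\approx x$ for small $x>0$.

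No substantial obstacle is expected here, since both inequalities to be invoked have already been stated in the paper; the computation is essentially a careful chain of algebraic manipulations, with the only mild care point being the correct tracking of the direction of inequalities when cross-multiplying and the positivity of $\la_2(\ho)-\la_1(B)$.
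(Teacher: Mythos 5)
Your proof is correct and follows essentially the same route as the paper: algebraic simplification using $\la_1(B_-\cup B_+)=\la_2(B_-\cup B_+)=2^{2/N}\la_1(B)$ to get \eqref{explicit}, then the quantitative Krahn--Szeg\"o inequality \eqref{QKSbha} applied to the convex set $\ho$ combined with the isoperimetric inequality \eqref{isofras} (Theorem~\ref{teo.iso}) to get \eqref{bounds2}. The only difference is presentational: you spell out the cross-multiplication reducing \eqref{bounds2} to $\la_2(\ho)/\la_2(B_-\cup B_+)-1\geq \betaks\A_2(M)^{9/2}$, whereas the paper merely lists the ingredients; your version is slightly more self-contained.
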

\begin{proof}
Define as $T$ the quantity on the right-hand side of \eqref{const}. Noticing that $\la_1(B_-\cup B_+)=\la_2(B_-\cup B_+)=2^{2/N}\la_1(B)$ and simplifying, we reach \eqref{explicit}. Moreover, plugging, the quantitative Krahn-Szeg\"o inequality \eqref{QKS} into \eqref{explicit}, yields a lower bound on $T$, which is independent on the eigenvalues of the Dirichlet-Laplacian
\[
T\geq 1-\frac{(2^{2/N}-1)}{(2^{2/N}-1)+2^{2/N} C_{KS} \mathcal{A}_2(\ho)^{2(N+1)}}.
\]
In two dimensions, on the other hand, one can use~\eqref{QKSbha},~\eqref{constantKS} and Theorem~\ref{teo.iso} together with~\eqref{hdjd} to get the explicit value in \eqref{bounds2}.
\end{proof}

\begin{remark}
The explicit value for the lower bound to the threshold $T$ is not very accurate, mostly due to the fact that the constant $\betaks$ is not the optimal one, but we believe it is important to show that a numerical value can actually be provided.
Moreover, if $N=2$, plugging the numerical computation of $\la_2(\ho)$ recalled in \eqref{la2values} into \eqref{explicit} and using $\la_1(B)=\pi j_{0}^2\approx18.168$, reveals a numerical approximation for the threshold defined by \eqref{explicit}:
\[
T\approx 0.083.
\]
\end{remark}

\subsection{The ball never minimizes the convex combination}
For the proof of Theorem \ref{teo.2} we need the following result. 

\begin{prop}\label{la2simple}
For a fixed $t\in(0,1)$, let $\Om_t$ be a minimizer of problem~\eqref{P}.
If the boundary of $\Om_t$ is of class $C^2$ then $\la_2(\Omega_t)$ is simple, namely $\la_1(\Om_t)<\la_2(\Om_t)<\la_3(\Om_t)$.
Moreover, on the boundary of $\Omega_t$, the following optimality condition holds:
\begin{equation}\label{optim12}
t\left|\nabla u_1(x)\right|^2+(1-t)\left|\nabla u_2(x)\right|^2=\frac{2F_t(\Omega_t)}{N}, \qquad x\in\partial \Om_t.
\end{equation}
\end{prop}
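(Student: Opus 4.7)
The plan is to establish \eqref{optim12} first under the assumption that $\la_2(\Om_t)$ is simple, and then rule out the case $\la_2(\Om_t) = \la_3(\Om_t)$ by a Serrin-type rigidity argument. For the first step, the $C^2$ regularity of $\partial\Om_t$ makes both $\la_1$ and $\la_2$ shape-differentiable in the classical sense, and Hadamard's formula gives $d\la_i(\Om_t;V) = -\int_{\partial\Om_t}|\nabla u_i|^2(V\cdot\nu)\,d\mathcal{H}^{N-1}$ for $i=1,2$, with $u_i$ normalized in $L^2$. Since $\Om_t$ minimizes $F_t$ under the volume constraint $|\Om|=1$, a standard Lagrange multiplier argument forces $t|\nabla u_1|^2 + (1-t)|\nabla u_2|^2 \equiv c$ on $\partial\Om_t$ for some constant $c$. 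To identify $c$ I would integrate this identity against $x\cdot\nu$, exploiting the Rellich--Pohozaev identity $\int_{\partial\Om_t}|\nabla u_i|^2 (x\cdot\nu) = 2\la_i(\Om_t)$ (valid for $L^2$-normalized Dirichlet eigenfunctions) and $\int_{\partial\Om_t} x\cdot\nu = N|\Om_t| = N$, obtaining $c = 2F_t(\Om_t)/N$.

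For the simplicity I would argue by contradiction: suppose $\la_2(\Om_t) = \la_3(\Om_t)$ and pick an $L^2$-orthonormal pair $(v_1,v_2)$ in the eigenspace $V_2$. The shape-derivative theory for a multiple eigenvalue (\cite{H,bbh}) says that the one-sided directional derivatives of $\la_2$ in direction $V$ are the eigenvalues $\mu_1(V)\leq \mu_2(V)$ of the symmetric matrix $M(V)$ with entries $M_{ij}(V) = -\int_{\partial\Om_t}\nabla v_i\cdot\nabla v_j (V\cdot\nu)$. Writing the minimality inequality $-t\int|\nabla u_1|^2(V\cdot\nu) + (1-t)\mu_1(V)\geq 0$ both for $V$ and for $-V$, and using $\mu_1(-V) = -\mu_2(V)$, sandwiches $(1-t)\mu_2(V) \leq t\int|\nabla u_1|^2(V\cdot\nu) \leq (1-t)\mu_1(V)$ for every volume-preserving $V$. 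Combined with $\mu_1\leq\mu_2$, this forces $\mu_1(V) = \mu_2(V)$, i.e., $M(V)$ is scalar; varying $V\cdot\nu$ among functions of mean zero on $\partial\Om_t$ then gives the pointwise identities $|\nabla v_1|^2 - |\nabla v_2|^2 \equiv \text{const}$ and $\nabla v_1\cdot\nabla v_2 \equiv \text{const}$ on $\partial\Om_t$. Since $v_i = 0$ on $\partial\Om_t$ the gradients are purely normal, and a short algebraic manipulation on $f = \partial_\nu v_1$ and $g = \partial_\nu v_2$ (the relations $f^2-g^2$ and $fg$ both constant force $f^2$ and $g^2$ constant) shows $|\nabla v_i|^2$ is constant on $\partial\Om_t$; combining with the residual sandwich also forces $|\nabla u_1|^2$ to be constant on $\partial\Om_t$.

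Finally, the overdetermined system $-\Delta u_1 = \la_1 u_1$ in $\Om_t$, $u_1 = 0$ and $|\nabla u_1|$ constant on $\partial\Om_t$, in a $C^2$ domain, is amenable to Serrin's moving-plane argument \cite{serrin} (see also \cite{fg}) and forces $\Om_t$ to be a ball. But on a ball the $\la_2$-eigenfunctions have the form $\phi(r)Y_1(\theta)$ with $Y_1$ a degree-one spherical harmonic, and $|\nabla v|^2 = \phi'(R)^2 Y_1^2$ is manifestly non-constant on the sphere for any nontrivial $v\in V_2$, contradicting what was just deduced. So $\la_2(\Om_t)$ must be simple, and the strict inequality $\la_1(\Om_t) < \la_2(\Om_t)$ follows from the connectedness of $\Om_t$ proved in~\cite{im}.

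The main obstacle I anticipate lies in the second paragraph: the multi-valued nature of the shape derivative of a repeated eigenvalue must be handled carefully, and the key rigidity ``$M(V)$ is scalar for every volume-preserving $V$'' rests on probing simultaneously with $V$ and $-V$ together with the spectral ordering $\mu_1\leq\mu_2$. Everything else (Pohozaev, Lagrange multipliers, Serrin) is essentially textbook once the scalar identity $t|\nabla u_1|^2 + (1-t)|\nabla v|^2\equiv \text{const}$ has been secured.
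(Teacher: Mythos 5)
Your proof is correct and follows the same basic architecture as the paper's: shape-derivative theory for a possibly multiple eigenvalue, pointwise overdetermination on $\partial\Om_t$, Serrin rigidity, and contradiction with the known structure of second eigenfunctions on a ball. Three deviations from the paper's route are worth noting. First, to kill the multiplicity the paper constructs explicit ``two-bump'' normal perturbations supported near pairs of boundary points $P,Q$ (with $V\cdot\nu=\varphi$ near $P$ and $V\cdot\nu=-\varphi$ near $Q$) and then argues that the $(k-1)\times(k-1)$ matrix $A_t(P)$ must equal $A_t(Q)$ for every pair $P,Q$; you instead use the abstract trick of probing with both $V$ and $-V$, exploiting $\mu_1(-V)=-\mu_2(V)$ to sandwich $\mu_1(V)=\mu_2(V)$ for every volume-preserving $V$, i.e., that $M(V)$ is scalar. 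The two routes yield the same pointwise identities on $\partial\Om_t$ ($\nabla v_1\cdot\nabla v_2$ constant and $|\nabla v_1|^2-|\nabla v_2|^2$ constant), but your version is a cleaner variational argument that avoids having to say why a strict eigenvalue gap of $A_t(P)-A_t(Q)$ can always be produced. Second, and more substantively, the paper applies Serrin's rigidity to the sign-changing function $u_2$, which forces it to invoke the generalized version of \cite[Theorem 18]{fg}; you instead observe that the ``residual sandwich'' additionally pins $|\nabla u_1|$ constant on $\partial\Om_t$, and apply the classical Serrin theorem to the positive first eigenfunction $u_1$. That is arguably the neater choice, since it stays within the original moving-plane hypothesis of positivity. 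Third, you identify the constant in \eqref{optim12} via the Rellich--Pohozaev identity $\int_{\partial\Om_t}|\nabla u_i|^2(x\cdot\nu)=2\la_i(\Om_t)$, whereas the paper finds the Lagrange multiplier $\mu=2F_t(\Om_t)/N$ by the one-parameter scaling $s\mapsto\mathcal{L}(s\Om_t)$; these are equivalent computations. Reversing the order (optimality condition before simplicity) is harmless since the two parts are logically independent, and the concluding claim $\la_1(\Om_t)<\la_2(\Om_t)$ is correctly attributed to the connectedness of $\Om_t$ from \cite{im}.
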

\begin{proof}
The fact that $\la_1(\Om_t)<\la_2(\Om_t)$ holds since $\Om_t$ is connected, as it was proved in~\cite{im}.
To prove that $\la_2(\Om_t)$ is simple we proceed as in~\cite[Lemma 2.1]{bbh} (see also \cite[Lemma 2.5.9]{H}) assuming, by contradiction, that $\la_2(\Omega_t)$ is a multiple eigenvalue, in the sense that $\la_2(\Om_t)=\dots=\la_k(\Om_t)$ for some $k\geq3$ (see \cite{H} for a precise definition of the higher eigenvalues). 
We use the results of derivability of multiple eigenvalues, see for instance~\cite[Theorem~2.5.8]{H}.
We deform the domain $\Om_t$ with a regular vector field $\phi_\epsilon(x)= x+\epsilon V(x)$ and we denote with $\{u_i\}_{2\leq i\leq k}$ the orthonormal (for the $L^2$ scalar product) family of eigenfunctions associated to $\la_2(\Om_t)$.
The directional derivative of the map $\epsilon\mapsto \la_2(\phi_{\epsilon}(\Omega_t))$ at $\epsilon=0$ is one of the eigenvalues of the $(k-1)\times (k-1)$ matrix
\begin{equation}\label{derivative1}
A=\left(-\int_{\partial \Om_t}\frac{\partial u_i}{\partial \nu}\frac{\partial u_j}{\partial \nu}\, V. \nu \,d\sigma \right)_{2\leq i,j\leq k},
\end{equation}
where $V. \nu $ denotes the normal displacement of the boundary induced by the vector field $V$. Moreover, as observed before, the first eigenvalue is simple at $\Om_t$, then it is differentiable and the derivative is a linear form in $V.\nu$ supported on $\partial\Om_t$ (see e.g. \cite[Theorem 2.5.1]{H})
\begin{equation}\label{derivative2}
d\la_1(\Omega_t; V)=-\int_{\partial\Om_t}\left(\frac{\partial u_1}{\partial \nu}\right)^2V.\nu \, d\sigma,
\end{equation}
while the derivative of the volume is given by
\begin{equation}\label{derivative3}
d\text{Vol}(\Omega_t; V)=\int_{\partial\Om_t}V.\nu \, d\sigma.
\end{equation}

Now, since $\Om_t$ is a minimizer in \eqref{P}, then it is also a minimizer for the Lagrangian
\begin{equation}\label{lagrangian}
\mathcal L(\Om)=t\la_1(\Om)+(1-t)\la_2(\Om)+\mu|\Om|, \quad \text{with} \quad \mu:=\frac{2F_t(\Om_t)}{N}.
\end{equation}
Indeed, for such a $\mu$, the function $f(s)=\mathcal L(s\Om)$ of the real variable $s>0$ achieves its minimum in $s=1$, and since $\Om_t$ minimizes $F_t$ this implies that $\mathcal L(\Om_t)\leq \mathcal L(\Om)$ for every bounded open set $\Om\subset\R^N$ (see for instance \cite[Remark 3.6]{DPV}).
Then, we can differentiate the lagrangian $\mathcal{L}$ without taking care of the volume constraint, and from \eqref{derivative1},\eqref{derivative2} and \eqref{derivative3} the derivative of \eqref{lagrangian} is the smallest eigenvalue of the matrix
\[
A_t=td\la_1(\Omega_t;V)\mathop{Id}+(1-t)A+\mu d\text{Vol}(\Omega;V)\mathop{Id},
\]
where $\mathop{Id}$ is the $(k-1)\times (k-1)$ identity matrix.
Therefore, in order to get a contradiction with the optimality of $\Om_t$, it is enough to prove that there is always a vector field $V$ such that the matrix $A_t$ has a negative eigenvalue. To this purpose let us consider two points $P$ and $Q$ on $\partial \Om_t$ and two small neighborhoods $\gamma_{P}$ and $\gamma_{Q}$ of these two points of same length, say $2\delta$. Let us choose any regular function $\varphi$ defined on $(-\delta, \delta)$, vanishing on the boundary of the interval, and a deformation field $V$ such that:
\[
V. \nu=\varphi \;\mbox{on }\gamma_{P},\quad V. \nu=-\varphi\; \mbox{on }\gamma_Q,\quad V. \nu=0\;\mbox{elsewhere on }\partial \Om_t.
\]
With this choice of the field $V$, we have that the matrix $A$ in \eqref{derivative1} becomes the difference of two matrices
\[
A=A(P)-A(Q):=\left(-\int_{\gamma_P}{\frac{\partial u_i}{\partial \nu}\,\frac{\partial u_j}{\partial \nu}\varphi\,d\sigma}\right)_{2\leq i,j\leq k}+\left(\int_{\gamma_Q}{\frac{\partial u_i}{\partial \nu}\,\frac{\partial u_j}{\partial \nu}\varphi\,d\sigma}\right)_{2\leq i,j\leq k}
\]
and so the matrix $A_t$ becomes the difference of two matrices $A_t(P)- A_t(Q)$, where 
\[
A_t(P)=-t\int_{\gamma_P}{\left(\frac{\partial u_1}{\partial \nu}\right)^2\varphi\,d\sigma}\,Id+(1-t)A(P)+\mu \int_{\gamma_P}{\varphi\,d\sigma} \,Id,
\]
and $A_t(Q)$ defined analogously. The only case in which we cannot choose two points $P,Q$ and a function $\varphi $ such that the matrix has a negative eigenvalue is when $A_t(P)=A_t(Q)$, and this implies that, for every $2\leq i,j\leq k$
\begin{equation*}
\int_{\gamma_P}\frac{\partial u_i}{\partial \nu}\,\frac{\partial u_j}{\partial \nu}\,\varphi\,d\sigma=\int_{\gamma_Q}\frac{\partial u_i}{\partial \nu}\,\frac{\partial u_j}{\partial \nu}\,\varphi\,d\sigma,   
\end{equation*} 
and 	
\begin{equation*}
\int_{\gamma_P}\left[\left(\frac{\partial u_i}{\partial \nu}\right)^2-\left(\frac{\partial u_j}{\partial \nu}\right)^2\right]\,\varphi\,d\sigma=\int_{\gamma_Q}\left[\left(\frac{\partial u_i}{\partial \nu}\right)^2-\left(\frac{\partial u_j}{\partial \nu}\right)^2\right]\,\varphi\,d\sigma,
\end{equation*}
for every $P,Q\in\partial \Om_t$ and for every regular function $\varphi$.
It means, in particular, that the product $({\partial u_2}/{\partial \nu})({\partial u_3}/{\partial \nu})$ and the difference $({\partial u_2}/{\partial \nu})^2-({\partial u_3}/{\partial \nu})^2$ should be constant on $\partial \Om_t$,
and then $({\partial u_2}/{\partial \nu})^2$ is constant on $\partial \Om_t$. Applying the classical Serrin Theorem~\cite{serrin} to $u_2$ (or possibly to $-u_2$), in the slightly more general version~\cite[Theorem 18]{fg}, implies that $\Om_t$ must be a ball, which is a contradiction since $\partial u_2/\partial \nu$ cannot be constant on the boundary of a ball (see for example~\cite[Section 1.2.5]{H}). Then, also $\la_2(\Om_t)$ has to be simple.
	
At $\Om_t$ the derivative of the Lagrangian \eqref{lagrangian} has to be zero: then \eqref{derivative2}, \eqref{derivative3}, and the corresponding formula for the derivative of the second eigenvalue $\la_2$ \eqref{derivative1} with $k=2$ (since it is simple) yields that
\begin{equation*}
t\int_{\partial\Om_t}\left(\frac{\partial u_1}{\partial \nu}\right)^2V.\nu \, d\sigma+(1-t)\int_{\partial\Om_t}\left(\frac{\partial u_2}{\partial \nu}\right)^2V.\nu \, d\sigma=\mu\int_{\partial\Om_t}V.\nu \, d\sigma
\end{equation*}
for every regular vector field $V$.  
From this, recalling that in the Dirichlet case $|\nabla u_i|^2=(\partial u_i/\partial \nu)^2$ with $i=1,2$, we obtain \eqref{optim12}.
\end{proof}

\begin{remark}
The simplicity of $\la_2(\Om_t)$ can also be proved under other regularity assumptions on $\Om_t$.
The weakest assumption that is needed in order to differentiate the domain is that the boundary $\partial \Omega_t$ contains a part
\begin{equation}\label{assu}
\emph{\text{ $\Gamma$ which is nonempty, relatively open in $\partial \Omega_t$, connected and of class $C^{1,1}$}.}
\end{equation}
Then, it is enough to repeat the same proof by taking care that the vector field $V$ constructed therein is chosen with support contained in $\Gamma$, and one obtains an overdetermined condition only along that part $\Gamma$ of the boundary. If a Serrin principle is also available for a \emph{partially overdetermined} condition we are done. Indeed Proposition~\ref{la2simple} holds under different regularity assumptions on $\Om_t$, for instance:
\begin{itemize}
\item[-] if $\Omega_t$ is \emph{convex} with $\Gamma$ as in \eqref{assu} (using \cite[Theorem 7]{fg}); 
\item[-] if $\partial \Omega_t$ is connected with $\Gamma$ as in \eqref{assu} and analytically continuable (using \cite[Theorem 1]{fg}). 
\end{itemize}
\end{remark}

\begin{proof}[Proof of Theorem~\ref{teo.2}]
The proof is a straightforward consequence of Proposition~\ref{la2simple}: in every dimension, the second eigenvalue $\la_2(B)$ is not simple, therefore the ball $B$ can not be a minimizer for any $t\in(0,1)$.
\end{proof}

\begin{remark}\label{nearly}
In two dimensions, the fact that balls are never minimizers was implicitly contained in the work ~\cite{KW}. For an arbitrary $\eps>0$ small enough, in \cite{KW} the authors constructed a nearly spherical competitor $B_\eps$, with $|B_\eps|=1$, 
such that 
\[
\la_1(B_\eps)\leq \la_1(B)+d_1\eps^2,\quad\mbox{while }\quad \la_2(B_\eps)\leq \la_2(B)-d_2\eps,
\]
for some positive constants $d_1,d_2$.
Therefore, for every $t\in(0,1)$, it is possible to find $\eps>0$ so small so that 
\[
t\la_1(B_\eps)+(1-t)\la_2(B_\eps)<t\la_1(B)+(1-t)\la_2(B).
\]
\end{remark}

\section{Remarks on the attainable set of the lowest Dirichlet eigenvalues}\label{sec.atta}

We start listing the most important properties that are known on the attainable set $\E$ defined in \eqref{attainable} (see Figure~\ref{figure.3}):
\begin{enumerate}
	\item lies above the bisector $y=x$ (since by definition $\la_2(\Omega)\geq \la_1(\Omega)$ for every $\Omega\subset \R^N$).
	\item lies on the right of the line $x=\la_1(B)$ (for the Faber-Krahn inequality \eqref{FK}).
	\item lies above the line $y=\la_2(B_-\cup B_+)$ (for the Krahn-Szeg\"o inequality  \eqref{KS}).
	\item lies below the line $y=\frac{\la_2(B)}{\la_1(B)}x$ (for the Ashbaugh-Benguria inequality \cite{AB}).
	\item is conical with respect to the origin.
\end{enumerate}

The numerical picture provided by Keller and Wolff suggests the evidence that the attainable set $\E$ is convex, but this is a long-standing conjecture which is still unsolved.

\begin{conjecture}\label{longstanding}
The attainable set $\E$ is convex.
\end{conjecture}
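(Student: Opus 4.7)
The natural strategy is to identify the ``lower-left'' Pareto portion of $\partial \E$ with the curve $\gamma(t):=(\la_1(\Om_t),\la_2(\Om_t))$ as $t$ runs over $[0,1]$. By the monotonicity established in Section~\ref{sec.comb}, $\gamma$ is a monotone trajectory from $(\la_1(B),\la_2(B))$ at $t=1$ down to the diagonal point $(\la_1(\Theta),\la_1(\Theta))=(2^{2/N}\la_1(B),2^{2/N}\la_1(B))$ at $t=0$, and the minimality condition~\eqref{minimality} shows that the line $\{tx+(1-t)y=F_t(\Om_t)\}$ is a supporting line of $\E$ at $\gamma(t)$. As $t$ runs over $[0,1]$, the slopes $-t/(1-t)$ of these supporting lines sweep out the whole interval $(-\infty,0]$.

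Setting
\begin{equation*}
K\;:=\;\bigcap_{t\in[0,1]}\bigl\{(x,y)\in\R^2 \,:\, tx+(1-t)y \ge F_t(\Om_t)\bigr\},
\end{equation*}
one has $\E\subseteq K$ by minimality, so $K$ is automatically a closed convex set containing $\E$. The plan is to prove the reverse inclusion (after further intersecting with the Ashbaugh--Benguria half-plane $\{y\le(\la_2(B)/\la_1(B))x\}$ to bound things in the relevant direction), which would give $\E$ as an intersection of half-planes and hence convex. This reduction splits into three steps: (i) continuity of $t\mapsto\gamma(t)$, to be derived from a stability argument combining the compactness machinery of~\cite{BDM,bmpv,mp} with the uniqueness of the optimal value $F_t(\Om_t)$; (ii) surjectivity of $\gamma$ onto the Pareto boundary of $\E$; (iii) closure of $\E$ under deformations that raise $\la_2$ at fixed $\la_1$ (handling the ``upper'' part of $\E$), which seems within reach by explicit surgery adding small appendages that perturb the second eigenfunction without affecting the first.

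The decisive obstacle is step (ii). If at some $t^\ast\in(0,1)$ the minimizer of $F_{t^\ast}$ is non-unique in the strong sense that two competitors produce \emph{distinct} points in the $(\la_1,\la_2)$-plane, then $\gamma$ has a jump discontinuity leaving a gap in the Pareto boundary. Closing such a gap amounts to constructing, for every pair $(x_1,y_1),(x_2,y_2)$ of minimizing points lying on a common supporting line $t^\ast x+(1-t^\ast)y=F_{t^\ast}(\Om_{t^\ast})$, a continuous one-parameter family of admissible open sets of unit measure realizing the entire segment between $(x_1,y_1)$ and $(x_2,y_2)$ in the eigenvalue plane. No elementary geometric construction (disjoint unions with rescaling, Steiner-type symmetrizations, Minkowski combinations) is known to yield such a linear interpolation of Dirichlet eigenvalue pairs, and it is precisely this missing interpolation scheme that has kept Conjecture~\ref{longstanding} open. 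A realistic attack would have to either invent such an interpolation, or else upgrade the analysis of the minimizers $\Om_t$ to a quantitative concavity/differentiability statement for the value function $t\mapsto F_t(\Om_t)$ that rules out jumps \emph{a priori}, thereby forcing $\gamma$ to fill out the entire lower boundary.
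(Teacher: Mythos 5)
The statement you are attempting is labeled a \emph{conjecture} and the paper explicitly calls it ``a long-standing conjecture which is still unsolved''; there is no proof in the paper to compare against, and your write-up, to its credit, does not claim one either.

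What is worth noting is that the obstacle you isolate in step (ii) is precisely the one the paper formalizes as Conjecture~\ref{conj}: if two minimizers of $F_{t^\ast}$ with equal optimal value could carry distinct eigenvalue pairs, a segment of the supporting line $t^\ast x+(1-t^\ast)y=F_{t^\ast}(\Om_{t^\ast})$ would lie on $\partial\E$ with no curve of attainable spectra filling it in. The Proposition closing Section~\ref{sec.atta} is exactly the conditional implication ``Conjecture~\ref{conj} $\Rightarrow$ Conjecture~\ref{longstanding},'' and its proof is the same contrapositive reasoning you sketch (a failure of convexity yields a chord of $\C$ lying under $\C$, hence a slope $t^\ast/(t^\ast-1)$ whose supporting line meets $\C$ at two points, hence two non-isospectral minimizers of $F_{t^\ast}$). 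So your diagnosis of the bottleneck matches the authors'. Your scaffolding is, however, overstated in two places. Step (iii) (filling out $\E$ above the Pareto curve by ``explicit surgery adding small appendages'') is not a routine construction: the paper recalls that vertical and horizontal convexity of $\E$ is a genuine theorem of Bucur--Buttazzo--Figuereido~\cite{bbf}, proved in the class of quasi-open rather than open sets, and still falls short of convexity (an L-shape is vertically and horizontally convex but not convex). Step (i) (continuity of $t\mapsto(\la_1(\Om_t),\la_2(\Om_t))$) does not follow from the existence machinery of~\cite{BDM,bmpv,mp}, which gives compactness and existence for each fixed $t$ but nothing about uniqueness or continuous dependence of the optimal eigenvalue pair; the possible non-uniqueness is precisely the open issue your step (ii) and the paper's Conjecture~\ref{conj} both target, so it cannot be cited as already settled. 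In sum, your proposal correctly reduces the conjecture to the same open sub-conjecture the paper singles out, which is a genuine insight, but presents the flanking steps as more elementary than they are.
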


The most important result in the direction of this conjecture was proposed by Bucur, Buttazzo and Figuereido in~\cite{bbf}. These authors proved that the attainable set \eqref{attainable}, constructed through quasi-open set instead of open set, is convex in the vertical and in the horizontal direction and, as a consequence, that it is closed. Nevertheless the vertical and horizontal convexity \emph{do not} imply convexity (think, for example to an L-shaped set).

From the properties of the set $\E$ listed above it is clear that the unique unknown part of the boundary of $\E$ is the curve $\C$ connecting the points $P_B=(\la_1(B),\la_2(B))$ and $P_{B_-\cup B_+}=(\la_1(B_-\cup B_+),\la_2(B_-\cup B_+))$. The convexity of $\E$ is then guaranteed as soon as $\C$ can be parametrized by a convex function.  For this reason it is important to have more informations on the curve $\C$. In two dimensions, Keller and Wolf in~\cite{KW} showed that the tangent of $\mathcal C$ at the point $P_B$ corresponding to a ball $B$ is vertical. They constructed a nearly spherical perturbation of $B$, as recalled in Remark~\ref{nearly}, and then they computed the slope of the tangent to $\C$ as $\epsilon\to0$. Moreover, in all dimensions, Brasco, Nitsch and Pratelli showed that the tangent of $\mathcal C$ at the point $P_{B_-\cup B_+}$ corresponding to two balls $B_-\cup B_+$ is horizontal. In this case the limit as $\epsilon\to 0$ was computed by overlapping the two balls $B_-$ and $B_+$ of a quantity measured in terms of the parameter $\epsilon$.
In the following proposition we recover these limits relying on the minimality condition of the minimizers of convex combinations \eqref{minimality} without any explicit construction. Notice that the strategy that we adopt holds in all dimensions.

\begin{figure}
\begin{tikzpicture}[domain=0:10, scale=0.6] 
  
  \draw[->] (-0.2,0) -- (10,0) node[below] {$x$};
  \draw[->] (0,-0.3) -- (0,11) node[left] {$y$};
  
   \fill[black] (7,9.5) circle (0pt) node {\Large $\E$};
   \draw[thick, domain=36.336/6:10]   plot (\x,{(\x)}) node[right] {\small $y=x$};
   \draw[very thin, dashed,domain=0:36.336/6]   plot (\x,{(\x)});
   \draw[thick, domain=18.168/6:4]   plot (\x,{(\x)*2.54}) node[left] {\small $y=\frac{\la_2(B)}{\la_1(B)}x$};
   \draw[very thin, dashed,domain=0:3.03]   plot (\x,{(\x)*2.539});
   \fill[black] (18.168/6,18.168*2.54/6) circle (2.5pt) node[above left] {\small $P_{B}$};;
   \fill[black] (36.336/6,36.336/6) circle (2.5pt) node[right] {\small $P_{B_-\cup B_+}$};
  
   \draw[thick] (18.168/6,18.168*2.54/6) arc (180:270:3.028cm and 1.63512cm);
   \draw[domain=-0.2:7]   plot (\x,{-0.7*(\x)+9.23}) node[below] {\small $y=\frac{t}{t-1}x+\frac{F_t(\Om_t)}{1-t}$}; 
    \fill[black] (3.66,6.675) circle (2.5pt) node[below] {\small $P_{\Om_t}$}; 
    \fill[black] (5,6.5) circle (0pt) node {$\C$};

\end{tikzpicture}
\caption{The attainable set for the lowest eigenvalues.}\label{figure.3}
\end{figure}
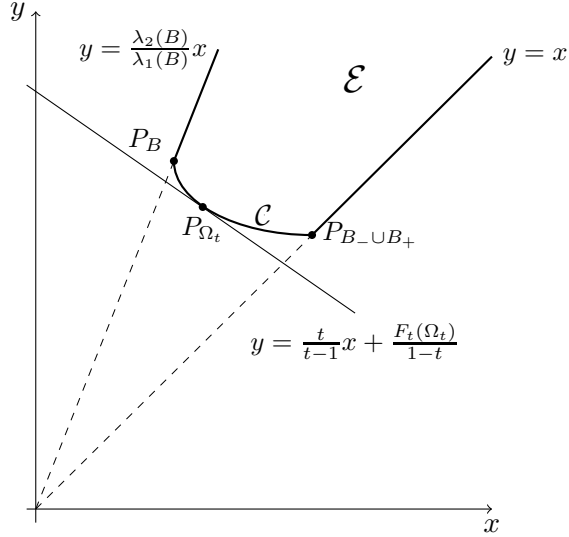

\begin{theorem}\label{teo.tangent}
For every dimension $N\geq 2$ and $t\in(0,1)$, let $\Om_t$ be a minimizer of problem~\eqref{P}. Then we have:
\begin{itemize}
\item[i)] the tangent of $\mathcal C$ at the point $P_B$ corresponding to one ball is \emph{vertical}, namely
\begin{equation}\label{limit1}
\lim_{t\rightarrow 1}{\frac{\la_2(\Om_t)-\la_2(B)}{\la_1(\Om_t)-\la_1(B)}}=-\infty
\end{equation}
	
\item[ii)] the tangent of $\mathcal C$ at the point $P_{B_-\cup B_+}$ corresponding to two identical balls is \emph{horizontal}, namely\begin{equation}\label{limit2}
\lim_{t\rightarrow 0}{\frac{\la_2(\Om_t)-\la_2(B_-\cup B_+)}{\la_1(\Om_t)-\la_1(B_-\cup B_+)}}=0.
\end{equation}
\end{itemize}
	
Moreover, the following limits holds
	\begin{equation}\label{limits3}
	\lim_{t\rightarrow 0}{\la_2(\Om_t)}=\la_2(B_-\cup B_+) \qquad \text{and}\qquad \lim_{t\rightarrow 1}{\la_1(\Om_t)}=\la_1(B).
	\end{equation}
\end{theorem}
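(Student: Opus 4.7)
The whole theorem will follow from a single mechanism: specialize the minimality inequality \eqref{minimality2} to the two natural competitors $B$ (for the $t\to 1$ limits) and $B_-\cup B_+$ (for the $t\to 0$ limits), then rearrange. It will be convenient to rewrite \eqref{minimality} in the symmetric form
\[
t\bigl(\la_1(\Om_t)-\la_1(\Om)\bigr)\leq (1-t)\bigl(\la_2(\Om)-\la_2(\Om_t)\bigr).
\]
Once $\Om$ is specialized, Faber-Krahn~\eqref{FK} and Krahn-Szeg\"o~\eqref{KS} fix the signs of both sides, so the resulting bounds can be read as uniform bounds on the two slopes.

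I will first handle~\eqref{limits3}. Taking $\Om=B$ and using Krahn-Szeg\"o to estimate $\la_2(B)-\la_2(\Om_t)\leq \la_2(B)-\la_2(B_-\cup B_+)$, I obtain
\[
0\leq \la_1(\Om_t)-\la_1(B)\leq \frac{1-t}{t}\bigl(\la_2(B)-\la_2(B_-\cup B_+)\bigr),
\]
which forces $\la_1(\Om_t)\to \la_1(B)$ as $t\to 1$. The symmetric choice $\Om=B_-\cup B_+$, combined with Faber-Krahn to estimate $\la_1(B_-\cup B_+)-\la_1(\Om_t)\leq \la_1(B_-\cup B_+)-\la_1(B)$, yields $\la_2(\Om_t)\to \la_2(B_-\cup B_+)$ as $t\to 0$.

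For~(i), the same inequality with $\Om=B$, after dividing by the strictly positive quantity $\la_2(B)-\la_2(\Om_t)$, gives
\[
0\leq \frac{\la_1(\Om_t)-\la_1(B)}{\la_2(B)-\la_2(\Om_t)}\leq \frac{1-t}{t}\xrightarrow[t\to 1]{}0.
\]
Taking reciprocals and negating then produces~\eqref{limit1}. For~(ii), I will use the mirror inequality with $\Om=B_-\cup B_+$: since now both sides are strictly negative, dividing by $\la_2(B_-\cup B_+)-\la_2(\Om_t)<0$ flips the inequality to
\[
0\leq \frac{\la_2(\Om_t)-\la_2(B_-\cup B_+)}{\la_1(B_-\cup B_+)-\la_1(\Om_t)}\leq \frac{t}{1-t}\xrightarrow[t\to 0]{}0,
\]
which combined with the strict sign $\la_1(\Om_t)-\la_1(B_-\cup B_+)<0$ from~\eqref{positive2} gives~\eqref{limit2}.

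The only bookkeeping needed is to ensure that the ratios are well defined and signed as claimed, i.e.\ that $\la_1(\Om_t)>\la_1(B)$ and $\la_2(B)>\la_2(\Om_t)>\la_2(B_-\cup B_+)$ hold strictly. Both follow from the rigidity parts of~\eqref{FK} and~\eqref{KS}, together with $\Om_t\neq B$ (Theorem~\ref{teo.2}) and with the connectedness of $\Om_t$ established in~\cite{im} (which rules out $\Om_t=B_-\cup B_+$). I do not expect any real obstacle: the entire argument is algebraic manipulation of the minimality condition, and the only delicate point is tracking signs when dividing by the (possibly negative) quantity $\la_2(\Om)-\la_2(\Om_t)$.
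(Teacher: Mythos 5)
Your proof is correct and follows essentially the same mechanism as the paper: specialize the minimality inequality to the competitors $B$ and $B_-\cup B_+$, fix the signs of all four differences using Faber--Krahn, Krahn--Szeg\"o, and Theorem~\ref{teo.2}, and then rearrange. The only noticeable variation is the order: you derive the convergences \eqref{limits3} first, by a direct sandwich $0\le \la_1(\Om_t)-\la_1(B)\le \tfrac{1-t}{t}\bigl(\la_2(B)-\la_2(B_-\cup B_+)\bigr)$ (and its mirror), whereas the paper proves the tangent statements (i) and (ii) first and then deduces \eqref{limits3} from the boundedness of the relevant numerator/denominator. Both orderings use the same ingredients and yield the same conclusion; your direct sandwich for \eqref{limits3} is arguably slightly cleaner, but it is not a genuinely different route.
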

\begin{proof}
From the Faber-Krahn inequality \eqref{FK} and Theorem~\ref{teo.2} we find that 
\begin{equation}\label{positive0}
\la_1(B)<\la_1(\Om_t),
\end{equation}
which plugged into \eqref{minimality} with $\Om=B$ yields
\begin{equation}\label{positive}
\la_2(\Om_t)<\la_2(B).
\end{equation}
Taking $\Om=B$ in \eqref{minimality2} and dividing therein by $\la_2(B)-\la_2(\Omega_t)$  (which from \eqref{positive} is a strictly positive value) yields
\[
\frac{\la_1(\Om_t)-\la_1(B)}{\la_2(B)-\la_2(\Om_t)}+1\leq \frac{1}{t}. 
\]
From \eqref{positive0} and \eqref{positive} one can see that the ratio on the left-hand side of this inequality is a positive number, therefore,  letting $t\uparrow 1$, necessarily, it holds the limit in \eqref{limit1}. Moreover, repeating the computations made in the proof of Theorem \ref{teo.1} and letting $t\downarrow 0$ in \eqref{const2}, it follows the limit in \eqref{limit2}.

Finally, the limits in \eqref{limits3} are a consequence of \eqref{limit1}, \eqref{limit2} and of the boundedness of the denominator in \eqref{limit2} (because of \eqref{positive2}) and of the numerator in \eqref{limit1} (because of \eqref{positive}).
\end{proof}

\begin{remark}
The strategy of looking at the boundary of the attainable set through convex combinations can be applied to other attainable sets with different constraints, for instance to the attainable set with a perimeter constraint \cite{AH2}
\[
\E_{\text{p}}:=\left\{(x,y)\in\R^2\;:\;x=\la_1(\Om),\; y=\la_2(\Om),\;\Om\subset\R^N, \;\mbox{$\Om$ open},\;\mathcal{P}(\Om)=1\right\},
\]
or to the attainable set with a convexity constraint \cite{ah}
\[
\E_{\text{c}}:=\left\{(x,y)\in\R^2:\text{$x=\la_1(\Om)$, $y=\la_2(\Om)$, $\Om\subset\R^N$, $\Om$ open and convex, $|\Om|=1$}\right\}.
\]
In particular, as in Theorem~\ref{teo.tangent}, it is possible to show that the tangent of $\partial \E_{\text{p}}$ (or of $\partial \E_{\text{c}}$) at the point $P_B$ is \emph{vertical}.
\end{remark}

We finish this discussion formulating an \emph{isospectral} conjecture on the minimizers of problem~\eqref{P}, which could be used to prove the convexity of the attainable set $\E$.

\begin{conjecture}\label{conj}
Let $t\in(0,1)$ and assume $X,Y\subset \R^N$ to be minimizers of problem \eqref{P} with $F_t(X)=F_t(Y)$. Then, the lowest eigenvalues of $X$ and $Y$ coincide, namely
\[
\la_1(X)=\la_1(Y) \qquad \text{and}\qquad \la_2(X)=\la_2(Y).
\]
\end{conjecture}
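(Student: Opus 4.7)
The natural strategy is to argue by contradiction. Suppose $X$ and $Y$ are minimizers of \eqref{P} with $F_t(X)=F_t(Y)$ but distinct eigenvalue pairs; up to swapping, this forces $\lambda_1(X)<\lambda_1(Y)$ and $\lambda_2(X)>\lambda_2(Y)$, both strictly. The aim will be to use the joint minimality of $X$ and $Y$ to force them to coincide spectrally.

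A first step is to lift to the Lagrangian formulation of Proposition~\ref{la2simple}: since $F_t(X)=F_t(Y)$, both sets minimize the same free functional
\[
\mathcal{L}(\Om)=t\lambda_1(\Om)+(1-t)\lambda_2(\Om)+\mu|\Om|,\qquad \mu=\frac{2F_t(X)}{N},
\]
over all bounded open subsets of $\R^N$, with the same minimum value. Granting enough boundary regularity, both also satisfy, on their respective boundaries, the overdetermined relation $t|\nabla u_1|^2+(1-t)|\nabla u_2|^2=\mu$ with the same constant $\mu$. I would then try to produce an admissible path $\{Z_s\}_{s\in[0,1]}$ with $Z_0=X$ and $Z_1=Y$ for which $s\mapsto F_t(Z_s)$ is regular enough to be analyzed: since $Z_0$ and $Z_1$ realize the minimum of this scalar function, either every $Z_s$ is again a minimizer, or some interior $Z_s$ beats $F_t(X)$, yielding a direct contradiction. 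In the former alternative, the shared overdetermined condition \eqref{optim12} holding along the whole family, combined with a Serrin-type rigidity (in the spirit of \cite{serrin,fg} and its partially overdetermined refinements), should force the family to be constant, contradicting $X\neq Y$.

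The main obstacle, which is plausibly the reason why the statement is left as a conjecture, is the construction of such a path $\{Z_s\}$: shape spaces are not linear, eigenvalues are nonlinear functionals of the domain, and there is no canonical correspondence between $\partial X$ and $\partial Y$ on which to base a Hadamard-type deformation. A possible alternative would be to parametrize the set of all minimizers of $\mathcal{L}$ at this fixed $\mu$ (for instance by the gap $\lambda_2-\lambda_1$) and prove an intrinsic continuity or compactness statement for this set, to which the Serrin rigidity could then be applied; but this calls for a structural understanding of the free boundary problem associated to $\mathcal{L}$ that goes beyond the tools assembled in the paper.
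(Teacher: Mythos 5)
The statement you are attempting is labeled as Conjecture~\ref{conj}, and the paper offers no proof of it; the authors leave it open and only show in the subsequent proposition that it would imply Conjecture~\ref{longstanding}. So there is no paper proof to compare against, and any complete argument here would be a genuinely new result. You are candid that the path construction is the main obstacle, which is a fair assessment, but there are two additional logical gaps in the strategy as written that go beyond that.

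First, the dichotomy ``either every $Z_s$ is again a minimizer, or some interior $Z_s$ beats $F_t(X)$'' is not the correct alternative. Since $X$ and $Y$ already realize the global minimum of $F_t$ over admissible sets of unit measure, no admissible competitor $Z_s$ can satisfy $F_t(Z_s)<F_t(X)$; the negation of ``every $Z_s$ is a minimizer'' is merely ``$F_t(Z_s)>F_t(X)$ for some $s$,'' which produces no contradiction. The dichotomy as stated would only make sense if $F_t$ were known to be convex along the path, but no such convexity is available for eigenvalue functionals under domain deformation, and in fact the very convexity of the attainable set $\E$ that this line of reasoning would be feeding into is precisely the open Conjecture~\ref{longstanding}. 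Second, the appeal to Serrin-type rigidity does not apply in the way you intend. In Proposition~\ref{la2simple} the Serrin argument is deployed to exclude multiplicity of $\lambda_2$: the contradiction comes from forcing $(\partial u_2/\partial\nu)^2$ constant on the boundary, which by Serrin implies $\Om_t$ is a ball. The optimality condition \eqref{optim12} for a genuine minimizer is, instead, the system $t|\nabla u_1|^2+(1-t)|\nabla u_2|^2=\mu$ on $\partial\Om_t$; no uniqueness or rigidity theorem is known for this two-eigenfunction overdetermined condition, and such a rigidity would be essentially equivalent to the conjecture, not a borrowed ingredient. Finally, even granting a one-parameter family of minimizers with the same $\mu$, the constancy of $F_t$ along the family does not force $\lambda_1$ and $\lambda_2$ to be individually constant: the convex combination can remain flat while the two eigenvalues trade off, which is exactly what the conjecture asks you to rule out, so the argument would not close even in the favorable alternative.
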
	

\begin{prop}
The validity of Conjecture~\ref{conj} implies that Conjecture~\ref{longstanding} holds true.
\end{prop}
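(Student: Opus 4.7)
The plan is to show that, granted Conjecture~\ref{conj}, the unknown portion $\C$ of $\partial\E$ is the graph of a convex function. Since the remaining bounds on $\E$ are the linear ones $y=x$ and $y=(\la_2(B)/\la_1(B))x$, together with vertical convexity from~\cite{bbf}, convexity of $\C$ immediately delivers convexity of $\E$.

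The first step is to use Conjecture~\ref{conj} to produce a well-defined continuous parametrization of $\C$. Set $x(t):=\la_1(\Om_t)$, $y(t):=\la_2(\Om_t)$, and $P_t:=(x(t),y(t))$; by the isospectral conjecture these do not depend on the choice of minimizer $\Om_t$. The monotonicity properties ii) and iii) of the first lemma in Section~\ref{sec.comb} give that $x$ is non-increasing and $y$ is non-decreasing on $(0,1)$, while Theorem~\ref{teo.tangent} supplies the endpoint limits $x(t)\to\la_1(B)$ as $t\uparrow 1$ and $x(t)\to\la_1(B_-\cup B_+)$ as $t\downarrow 0$. For continuity at any $t_0\in(0,1)$ I would take $s_n\to t_0$ together with minimizers $\Om_{s_n}$; the uniform diameter bound combined with the existence theory of~\cite{bmpv,mp} yields a subsequential limit $\Om^*$ which is itself a minimizer of $F_{t_0}$, and continuity of the Dirichlet eigenvalues along this convergence gives $\la_i(\Om_{s_n})\to\la_i(\Om^*)$; Conjecture~\ref{conj} then forces $\la_i(\Om^*)=x(t_0)$ or $y(t_0)$, so the limits are independent of the approximating sequence.

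The second step exploits the minimality condition. For each $t\in(0,1)$ the line $\ell_t:\;tx+(1-t)y=F_t(\Om_t)$ supports $\E$ at $P_t$, with slope $-t/(1-t)\in(-\infty,0)$. Writing $\C$ as the graph of a function $h$ on $[\la_1(B),\la_1(B_-\cup B_+)]$ (possible by vertical convexity of $\E$), the support inequality becomes
\[
h(x)\ge y(t)-\tfrac{t}{1-t}\bigl(x-x(t)\bigr)\qquad\text{for every }x\text{ in the domain of }h.
\]
By the continuity, monotonicity and endpoint behaviour of $x(\cdot)$, the map $t\mapsto x(t)$ is a continuous surjection onto $(\la_1(B),\la_1(B_-\cup B_+))$; therefore, at every interior point of its domain, $h$ admits a linear lower bound touching it, which is precisely the defining property of a convex function. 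Extending by continuity at the two endpoints finishes the argument.

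The main obstacle is the continuity of $t\mapsto P_t$: making the compactness/convergence argument rigorous requires the quasi-open framework of~\cite{bmpv,mp}, the uniform diameter bound, and continuity of the Dirichlet spectrum with respect to $\gamma$-convergence, in order to extract limits of minimizers along $s_n\to t_0$ and identify them with minimizers of $F_{t_0}$. Once this continuity is secured, the support-line translation of the minimality condition and the resulting convexity of $h$ are essentially immediate.
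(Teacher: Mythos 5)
Your route is genuinely different from the paper's and considerably more elaborate. The paper argues by contradiction in three lines: if $\E$ is not convex, there is a straight line $l$ of negative slope touching $\C$ at two distinct points $P_X, P_Y$ while $\C$ stays above $l$; writing $l$ as $tx+(1-t)y=a$ with $t\in(0,1)$ shows $X$ and $Y$ are both minimizers of $F_t$, yet $\lambda_1(X)\neq\lambda_1(Y)$, contradicting Conjecture~\ref{conj}. This argument only uses the \emph{existence} of a supporting line with two contact points, which is automatic from non-convexity; it requires no regularity or continuity whatsoever of $t\mapsto(\la_1(\Om_t),\la_2(\Om_t))$.

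You instead try to build a continuous parametrization $t\mapsto P_t$ of $\C$ and then argue that every interior point of the graph of $h$ carries a supporting line, hence $h$ is convex. The support-line computation and the convexity criterion are correct. The genuine gap is the continuity of $t\mapsto P_t$, which you flag as ``the main obstacle'' and only sketch: it requires $\gamma$-compactness of minimizers up to translations, identification of the limit as a minimizer of $F_{t_0}$, and above all upgrading from weak-$\gamma$ (which gives only lower semicontinuity of $\la_1,\la_2$) to genuine $\gamma$-convergence so that the eigenvalues actually converge. Without this lemma, surjectivity of $x(\cdot)$ onto $(\la_1(B),\la_1(B_-\cup B_+))$ is not established, the supporting-line family could miss an interval, and the convexity of $h$ does not follow. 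So your proposal is not complete as written, whereas the paper's contradiction argument bypasses the continuity issue entirely and proves the implication directly.
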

\begin{proof}
If $\E$ is not convex, then we can find two points $P_X,P_Y\in\C$, corresponding, respectively, to $X,Y$, and a straight line $l$ passing through these points such that the curve $\C$ lies above $l$.
Therefore, it is clear that $l$ will be of the form $t x+(1- t)y=a$ for some fixed $t\in (0,1)$ and a real number $a$.
Hence the sets $X,Y$ are minimizers in \eqref{P} for such a $t$, but $\la_1(X)\not=\la_1(Y)$ and $\la_2(X)\not=\la_2(Y)$, a contradiction with Conjecture~\ref{conj}. 
\end{proof}

\section{Appendix: explicit constants in quantitative inequalitites}\label{appendix}

In the following we compute explicit constants in some quantitative inequalities. 
We focus in particular, on the quantitative Krahn-Szeg\"o inequality \eqref{QKS},
having in mind its application in Corollary~\ref{cor3.3}. We start with a brief overview on the most important quantitative inequalities, without pretending of being exhaustive.

The \emph{quantitative isoperimetric inequality for the De Giorgi perimeter} $\mathcal P$ in the sharp version was proved by Fusco, Maggi and Pratelli in~\cite{fmp}: there exist a constant $C_{I}>0$ (depending on the dimension) such that for all open sets $\Om\subset\R^N$ of finite measure $|\Om|=|B|$,
\begin{equation}\label{QI}
\frac{\mathcal P(\Om)}{\mathcal P(B)}-1\geq C_{ I}\A(\Om)^2,
\end{equation} 
where $\A$ is as in \eqref{2fras1}.
In this setting, a quantitative inequality is \emph{sharp} when it has the least exponent on the Fraenkel asymmetry, and a constant is \emph{optimal} when it has the largest possible value. Moreover, the quantity estimated from below through the asymmetry is the \emph{deficit} of the functional.

The same authors later proved a \emph{quantitative Faber-Krahn inequality}, with a proof based on~\eqref{QI}: there exists a constant $C_{FK}>0$ (depending on the dimension) such that for all open sets $\Om\subset\R^N$ of finite measure $|\Om|=|B|$,
\begin{equation}\label{QFK}
\frac{\la_1(\Om)}{\la_1(B)}-1\geq C_{FK}\A(\Om)^4.
\end{equation} 
Recently Brasco, De Philippis and Velichkov proved a sharp quantitative Faber-Krahn inequality~\cite{bdpv} with the exponent $2$ instead of $4$ in \eqref{QFK}. Unfortunately, their proof relies on the so called \emph{selection principle}, which does not allow to get an explicit constant.

On the other hand a \emph{quantitative Krahn-Szeg\"o inequality}~\eqref{QKS} was proved by Brasco and Pratelli in~\cite{bp}, based on~\eqref{QFK}. Their proof has the good feature of being easily adaptable once a better quantitative Faber-Krahn inequality is available. 

Finding optimal constants in quantitative inequalities is a quite difficult task.
Indeed, up to our knowledge, even the optimal constant for the quantitative isoperimetric inequality \eqref{QI} in the plane is not explicitly known (see, for instance~\cite{cl} and \cite{BCH}). The optimal constant for the quantitative isoperimetric inequality \eqref{QI} is explicitly known only among \emph{convex} sets of the plane, as was proved in \cite{afn}. Moreover, it is also difficult to find explicit non-optimal constants, since often the techniques used in the proofs do not allow to identify constants. Nevertheless, a non-optimal constant for the quantitative isoperimetric inequality~\eqref{QI} was obtained in~\cite{fimp}.
The constant for the quantitative isoperimetric inequality is important when looking for explicit constants in the quantitative Faber-Krahn and Krahn-Szeg\"o inequalities, since both proofs of~\eqref{QFK} and \eqref{QKS} rely on the quantitative isoperimetric inequality \eqref{QI}. With the path just outlined, it is then possible to find an explicit constant for~\eqref{QKS} for all $N\geq2$, although the computations are rather unpleasant and the constants are far from being optimal.

However, in two dimensions, we can follow a different strategy.  In order to get an explicit constant for the quantitative Krahn-Szeg\"o inequality, we use the quantitative Faber-Krahn inequality provide by Bhattacharya in~\cite{bha}. The key point of this inequality is that it  does not rely on a quantitative isoperimetric inequality. Therefore, it can be obtained a better explicit constant $\betafk$ for the quantitative Faber-Krahn inequality, which plugged into the proof of the quantitative Krahn-Szeg\"o inequality \eqref{QKS} allows to improve the constant $\betaks$ as well. To prove this result we go through the papers \cite{bha} and \cite{bp} step-by-step and we highlight the points
where an explicit constant is needed, using, in particular, the same notation of the paper involved.

\subsection{An explicit constant in the quantitative Faber-Krahn inequality}
The quantitative Faber-Krahn inequality in the form proved by Bhattacharya~\cite{bha} reads as: there exists a constant $\betafk>0$ such that for all open sets $\Om\subset \R^2$ of finite measure $|\Om|=|B|$,
\begin{equation}\label{QFKbha}
fk(\Om):=\frac{\la_1(\Om)}{\la_1(B)}-1\geq \betafk\A(\Om)^3.
\end{equation}  
We show that the constant $\betafk$ in~\eqref{QFKbha} can be chosen as follows:
\begin{equation}\label{CQFK}
	\betafk=\frac{1}{10^{5}\cdot 2^3\cdot j_0^2}\approx 2.161\cdot 10^{-7}.
\end{equation}
We use the same notation of~\cite{bha}, noticing that the asymmetry $\alpha$ used there differs from \eqref{2fras1} for a factor 2. Indeed using \eqref{equivalent} for an open set $\Om\subset\R^2$ of unit measure and $E=B$ we have that\[
\alpha(\Om)=\frac{\A(\Om)}{2}.
\]
It is enough to study the case when $fk(\Om)\leq 1$ since once
\begin{equation}\label{eq.appendix0}
fk(\Om)\geq K\alpha(\Om)^3,\qquad \mbox{if }fk(\Om)\leq 1,
\end{equation}
is established, then immediately \eqref{QFKbha} holds true with $\betafk=\min\{K,1\}/2^3$ (recall that $\alpha(\Om)\leq 1$ by definition).
Therefore, let us assume the deficit $fk(\Om)\leq 1$. The first point of~\cite{bha} is Lemma~3.2: it is shown a bound on the ${L^\infty}$-norm of the first eigenfunction $u_1$, that is 
\begin{equation}\label{upperu1}
\|u_1\|_{L^\infty}\leq \frac{\la_1(\Om)}{2\pi}\leq \frac{\la_1(B)}{\pi}=j_0^2,
\end{equation}
where the second inequality holds thanks to the assumption $fk(\Om)\leq 1$, while the last equality comes from the explicit value of $\la_1(B)$ recalled in \eqref{FK}.

Now we can directly pass to analyze Section 4 of~\cite{bha}, where the proof of the main result is carried out.
We recall the constants that will be used:
\begin{equation}\label{khhhj}
p=2,\qquad  k=\frac{1}{625}=\frac{1}{5^4}.
\end{equation}
Only Case 1 is of our interest, since Case 2 deals with $p<2$.
In subcase $(i)$ one gets 
\begin{equation}\label{eq.appendix1}
fk(\Om)\geq \frac{1}{10^5\,j_0^2}\,\alpha(\Om)^3,
\end{equation}
where we used \eqref{khhhj} and $M$ therein defined as an upper bound for $\|u_1\|_{L^\infty}$ and so, according to \eqref{upperu1}, can be taken as $j_0^2$. 
On the other hand, subcase $(ii)$ gives
\begin{equation}\label{eq.appendix2}
fk(\Om)\geq \frac{61}{200}\alpha(\Om)\geq  \frac{61}{200}\alpha(\Om)^3,
\end{equation}
where the second inequality holds since $\alpha(\Om)\leq 1$. Then, combining \eqref{eq.appendix1} with \eqref{eq.appendix2} we get \eqref{eq.appendix0} with $K={1/(10^5\cdot j_0^2)}<1$,
and therefore, from the previous observation $\betafk=K/2^3$, providing \eqref{CQFK}.

\subsection{An explicit constant in the quantitative Krahn-Szeg\"o inequality}

We can now derive an explicit constant $\betaks>0$ for~\eqref{QKSbha}. 
We go through the proof of the quantitative Krahn-Szeg\"o inequality of~\cite{bp} and give the explicit value of the constants that are needed in the proofs, using the quantitative Faber-Krahn~\eqref{QFKbha} instead of~\eqref{QFK}, as it was originally done in that paper.
First of all we need to give an explicit constant for Lemma~3.3 of~\cite{bp}, such that (using their notations, but numerating the constants $\th_1, \th_2,\dots$ in order to keep track of them in all steps)
\begin{equation}\label{lemma3.3}
	\A_2(\Om)\leq \th_1\left(\A(\Om_+)+\left|\frac12-\frac{|\Om_+|}{|\Om|}\right|+\A(\Om_-)+\left|\frac12-\frac{|\Om_-|}{|\Om|}\right|\right)^{2/3}.
\end{equation}
This constant is deduced by putting together three main inequalities.

$i)$ First of all we call $\eps_{\pm}=\frac12-\frac{|\Om_\pm|}{|\Om|}$ 
and call $B_\pm$ two balls centered in the origin such that $|\Om_{\pm}|=|B_{\pm}|$.
Hence there exist two points $x_{\pm}$ such that \[
\A(\Om_{\pm})=\frac{2|(B_\pm+x_\pm)\setminus \Om_\pm|}{|\Om_\pm|}.
\]
We then rescale the balls to measure $|\Om|/2$ each: $\widetilde B_{\pm}=(1-2\eps_\pm)^{-1/2}B_\pm$.
We have now to translate the new balls in the direction $x_+-x_-$ so that they are disjoint. It is easy to see that the width $l$ of the set $(\widetilde B_++x_+)\cap (\widetilde B_-+x_-)$ can be estimated by\[
l^{3/2}|\Om|^{1/4}\leq \th_2 |(\widetilde B_++x_+)\cap (\widetilde B_-+x_-)|, 
\] 
with the choice of $\th_2=(8\pi)^{1/4}$.

$ii)$ The second intermediate inequality is \[
|(\widetilde B_++x_+)\cap (\widetilde B_-+x_-)|\leq \th_3 |\Om|(\A(\Om_+)+\A(\Om_-)+|\eps_+|+|\eps_-|),
\]
and it is possible to see immediately that $\th_3=1$ works.

$iii)$ We now have to translate $\widetilde B_-$ so that it is tangent to $\widetilde B_++x_+$, and we will call $\overline x$ the new center. It is possible to prove that \[
|(\widetilde B_-+x_-)\cap (\widetilde B_-+\overline x)|\leq \th_4 l|\Om|^{1/2},
\] 
with the constant $\th_4=\frac{\pi+2}{(2\pi)^{1/2}}$.

At the end it is possible to put together the above inequalities and get~\eqref{lemma3.3} with $\th_1=\th_4(\th_2\th_3)^{2/3}+2=\frac{\pi+2}{\pi^{1/3}}+2.$

In order to conclude the proof of the quantitative Krahn-Szeg\"o inequality by Brasco and Pratelli we have to prove a last intermediate inequality, when $ks(\Om)\leq 1$:\[
ks(\Om)\geq \frac{1}{\th_5}\max{\left\{\A(\Om_+)^3+\left|\frac12-\frac{|\Om_+|}{|\Om|}\right|,\A(\Om_-)^3+\left|\frac12-\frac{|\Om_-|}{|\Om|}\right|\right\}},
\]
which works for $\th_5=\frac{3}{\betafk}$, where $\betafk$ is the constant \eqref{CQFK}  of the quantitative Faber-Krahn inequality that we computed in the paragraph above.
We note that in~\cite{bp} the exponent of the asymmetries is actually $4$ instead of $3$, since they use the quantitative Faber-Krahn~\eqref{QFK} instead of the one proved by Bhattacharya, which is only two dimensional.
On the other hand, if $ks(\Om)\geq 1$ it is enough to take $\th_5=2^3+1=9$ since $\A(\Om_{\pm})\leq 2$ and $|\eps_{\pm}|\leq 1/2$.
So we have that $\th_5=\max{\{9, 3/\betafk\}}=3/\betafk$.
Putting all the inequalities together one arrives to \[
\begin{split}
\betaks&=\frac{1}{\th_5}\,\frac{1}{\th_1^{9/2}}=\frac{\betafk}{3}\,\frac{\pi^{3/2}}{(\pi+2+2\pi^{1/3})^{9/2}}\\
&=\frac{\pi^{3/2}}{24(\pi+2+2\pi^{1/3})^{9/2}\cdot j_0^2}\cdot 10^{-5}\approx 3.331 \cdot 10^{-11}.
\end{split}
\]

\bigskip
\textbf{Acknowledgements.}
We thank Bozhidar Velichkov for pointing out a link between Theorem~\ref{teo.1} with \cite[Section 5.3]{bbv}.
The first author was supported by the ERC advanced grant n. 339958 ``COMPAT''.
The second author was supported by the INdAM-GNAMPA through the Project 2015 ``Critical
phenomena in the mechanics of materials: a variational approach''.

\end{document}